\pdfoutput=1
\documentclass[3p,times,numbers,square]{elsarticle}
\usepackage{amsmath, amssymb, amsfonts} 
\usepackage{booktabs}
\usepackage{graphicx}
\usepackage{amsthm}
\usepackage{microtype}
\usepackage{hyperref}
\usepackage{multicol}
\usepackage{multirow}
 \usepackage{tikz}
\usepackage{mathpazo} 
\theoremstyle{definition}
\newtheorem{theorem}{Theorem}
\theoremstyle{definition}
\newtheorem{remark}{Remark}
\begin{document}
\begin{frontmatter}
\title{A Linear \texorpdfstring{$\alpha$}{alpha}-cut intervals based Parsimonious Fuzzy Best-Worst Method with an Application to Warehouse Location Selection}

\author{Vikas V. Sharma}
\ead{vikas.sharma.iitram@gmail.com}
\author{Mohit Kumar\corref{cor1}}
\ead{mohitkumar@iitram.ac.in}
\address{Department of Basic Sciences,\\ Institute of Infrastructure, Technology, Research And Management, Ahmedabad,\\ Gujarat-380026, India\\
Email: vikas.sharma.iitram@gmail.com, mohitkumar@iitram.ac.in}
\cortext[cor1]{Corresponding author}

\begin{abstract}
To address the limitations of existing fuzzy extensions of the Best-Worst Method (BWM), Ratandhara and Kumar proposed a model based on the combination of fuzzy sets and $\alpha$-cut intervals, known as the $\alpha$-cut intervals based Fuzzy Best-Worst Method ($\alpha$-FBWM). Despite its conceptual strengths, this approach faces significant execution challenges. It requires solving $2n+1$ complex non-linear optimization models, followed by subsequent interval boundary averaging to determine a unique weight set for $n$ decision criteria. This makes the method computationally intensive and time-consuming. Furthermore, because final unique weights are generated as crisp numbers, this makes it impossible to capture the inherent uncertainty within the obtained weights. To resolve these limitations, this research proposes a novel, computationally efficient framework named the Linear $\alpha$-cut intervals based Fuzzy Best-Worst Method (Linear $\alpha$-FBWM). The proposed approach reformulates a non-linear optimization model into a single linear programming model. This allows the framework to directly find the unique criteria weights as Triangular Fuzzy Numbers (TFNs), which effectively capture uncertainties in the resulting weights. Additionally, to evaluate the ordinal consistency of the generated weights, a metric named the Ordinal Preference Violation (OPV) is introduced based on the cognitive psychology-driven prominence effect. This metric assesses the alignment between the final weight rankings and the decision-maker's initial fuzzy pairwise preferences. Numerical examples show that the Linear $\alpha$-FBWM minimizes these logical violations and performs comparably to or better than the previous $\alpha$-FBWM model.
\par
To efficiently handle large-scale applications with numerous alternatives, we extend this framework into a new model, the Linear $\alpha$-cut intervals based Parsimonious Fuzzy Best-Worst Method (Linear $\alpha$-PFBWM), which directly integrates our proposed Linear $\alpha$-FBWM formulation. This parsimonious extension represents a major improvement over existing parsimonious extensions of FBWM, because the initial alternative ratings can be directly given as fuzzy numbers, and the final priorities of non-reference alternatives are calculated through fuzzy interpolation without any early defuzzification of the priorities of reference alternatives. The step-by-step computational process of the Linear $\alpha$-PFBWM is validated through a numerical example from existing literature to verify its scaling capabilities.
 Finally, the practical utility of the proposed Linear $\alpha$-PFBWM is confirmed using a real-world strategic industrial case study, selecting the ideal warehouse locations from 20 alternatives across Gujarat for a leading multinational paint manufacturing company. Using this integrated approach, the expert only required 20 initial fuzzy ratings and 35 fuzzy pairwise comparisons, instead of the usual 185. In the case of intense fuzzy pairwise comparisons, this constitutes a substantial decrease of $81.08\%$, proving the framework to be an efficient, time-saving, and reliable tool for complex industrial decision-making.
\end{abstract}

\begin{keyword}
Multi-Criteria Decision-Making \sep Fuzzy Best-Worst Method \sep $\alpha$-cut intervals \sep Linear programming \sep Cognitive load \sep Parsimonious FBWM \sep Warehouse location problem
\end{keyword}
\end{frontmatter}

\section{Introduction}
In everyday life, individuals and organizations consistently encounter difficult decisions. Examples include parents selecting an educational institution for their child, a family deciding to purchase a new car, or a corporation evaluating multiple business projects. In all such scenarios, the objective is to select the optimal alternative or to rank multiple options based on conflicting criteria. To systematically address these complex problems, famous Multi-Criteria Decision-Making (MCDM) methods such as AHP \cite{saaty1990make}, ANP \cite{ANP}, TOPSIS \cite{TOPSIS}, VIKOR \cite{VIKOR}, ELECTRE \cite{ELECTRE}, and PROMETHEE \cite{PROMETHEE} have traditionally been utilized by researchers and practitioners.

However, a key practical challenge arises with pairwise comparison based methods like AHP. In this method, the Decision-Maker (DM) constructs a complete pairwise comparison matrix to compute the criteria weights. More precisely, this involves \texorpdfstring{$n(n - 1)/2$}{n(n-1)/2} pairwise comparisons for $n$ criteria. While this method is effective for simple cases, but as the number of criteria or alternatives increases, the evaluation process quickly becomes tedious, time-consuming, and mentally exhausting. Naturally, handling such a massive cognitive load overwhelms DMs, causing them to lose focus and leading to inconsistent and unreliable judgments.

The Best-Worst Method (BWM) was developed as a highly efficient solution to address this significant challenge \cite{rezaei2015best}. BWM reduces the complexity of decision-making by using a reference based approach. Instead of comparing every single criterion against all others, the DM identifies only two key reference points, the most important criterion (Best) and the least important criterion (Worst). Then, all other remaining criteria are compared against these two reference points. This method brings down the total number of pairwise comparisons required to just $2n-3$. Consequently, BWM significantly decreases the number of pairwise comparisons, saves time, reduces cognitive load, and provides consistent results \cite{rezaei2015best}.

The standard BWM is effective in determining the weights of criteria and is also beneficial for determining the priorities of alternatives \cite{corrente2024better}. However, in large-scale situations, a practical challenge emerges when the number of alternatives is larger than nine \cite{corrente2024better}. At this scale, even the simplified pairwise comparisons of BWM represent a major cognitive load for the DM. To address this practical challenge, the concept of Parsimonious BWM (P-BWM) was introduced in \cite{moslem2023novel} and subsequently formalized into a robust framework in \cite{corrente2024better}.
 This method does not require the DM to compare each alternative with the best and worst reference alternatives but rather to directly assign initial ratings to all of them. From this rated list, a small, well-distributed set of reference alternatives is selected. Then, the usual pairwise comparisons are made with respect to these few reference alternatives, applying the standard BWM to obtain their priorities. The priorities of all non-reference alternatives are derived using an interpolation formula. The cognitive load of the DM is drastically reduced by restricting the numerous pairwise comparisons to a few references. As a result, this approach saves significant time and effort while leading to highly consistent and reliable results \cite{corrente2024better}.

In the standard BWM context, the DM's judgments are naturally expressed by crisp numbers. However, in practice, DMs often find it difficult to convey the precise numerical values of their preferences due to the inherent vagueness of human judgment and the complexity of the criteria involved. Consequently, the standard BWM approach was modified to accommodate this uncertainty. Guo and Zhao \cite{guo2017fuzzy} were the first to extend BWM to a fuzzy environment, proposing the Fuzzy Best-Worst Method (FBWM). The pairwise comparisons were conducted using Triangular Fuzzy Numbers (TFNs), and the final weights of the criteria were derived using a non-linear optimization model.

\subsection{Research Gaps and Motivation}
However, the initial fuzzy extension of BWM, FBWM, had some limitations because it relied on approximate fuzzy arithmetic. Consequently, the model failed to capture the complete geometric shape of the TFNs provided by the DM, leading to a significant loss of important judgment information \cite{ratandhara2024alpha}. Moreover, the FBWM framework was based on a non-linear model that occasionally led to multiple optimal solutions, generating unwanted ambiguity in the final weights' determination.

To systematically address these challenges, Ratandhara and Kumar \cite{ratandhara2024alpha} developed an $\alpha$-cut intervals based FBWM ($\alpha$-FBWM). This improved method is able to maintain the entire geometric shape of the fuzzy pairwise comparison values by exact fuzzy arithmetic operations specified through $\alpha$-cut intervals. Additionally, the $\alpha$-FBWM framework resolves the issue of multiple optimal solutions. However, despite its precision, the $\alpha$-FBWM framework presents critical research gaps:
\begin{itemize}
   \item \textbf{Computational Complexity:} The existing $\alpha$-FBWM requires solving $2n + 1$ non-linear optimization models, followed by subsequent interval boundary averaging to determine a unique weight set for $n$  criteria. This approach is computationally intensive and time-consuming.

    \item \textbf{Loss of Fuzzy Uncertainty:} The unique weight set obtained from the existing $\alpha$-FBWM consists of crisp numbers. Consequently, the framework fails to capture the underlying uncertainty inherent in this unique weight set.
\end{itemize}   
\par
\noindent \textbf{Limitations of Existing Parsimonious Models of FBWM:} Recently, researchers have focused on developing parsimonious extensions of the FBWM to accommodate large-scale decision problems. For example, the Fuzzy Parsimonious Z-BWM \cite{fuzzyparsimoniousZBWM} was introduced in 2025 to evaluate commuter travel options in Dublin, while another parsimonious approach was applied in 2026 to rate Global Sustainable Development Goals (SDGs) \cite{fuzzyPBWMSDGs}. However, these existing extensions have two major drawbacks. First, they force experts to use crisp numbers for their initial ratings, which is very difficult when dealing with imprecise, real-world situations. Second, when determining the priorities of non-reference alternatives, these methods prematurely defuzzify the priorities of reference alternatives into crisp values prior to applying the interpolation formula.

These research gaps motivate the development of the proposed Linear $\alpha$-FBWM and Linear $\alpha$-PFBWM frameworks. This study aims to mitigate computational complexity, preserve uncertainty, and reduce the cognitive load of DM in large-scale applications.

\subsection{Key Contributions and Paper Organization}
To overcome these limitations, this study develops effective frameworks. The core contributions are outlined as follows:
\begin{itemize}
   \item \textbf{Linear Model (Linear $\alpha$-FBWM):} We propose a linear programming model that yields the unique weight set for criteria. Additionally, this unique weight set is obtained directly as Triangular Fuzzy Numbers (TFNs) instead of crisp numbers, which captures the uncertainties in the resulting weights.

\item \textbf{A Novel Consistency Metric (OPV):} To evaluate the ordinal consistency of the generated weights using FBWM, a unique metric named the Ordinal Preference Violation (OPV) is introduced based on the cognitive psychology-driven prominence effect. This metric assesses the alignment between the final weight rankings and the DM's initial fuzzy pairwise preferences established through the Best-to-Others (BO) and Others-to-Worst (OW) vectors. By applying an asymmetric evaluation, the OPV metric accurately measures any discrepancy between the model's results and the DM's actual ordinal intent.

 \item \textbf{A New Parsimonious Fuzzy Best-Worst Method via Linear $\alpha$-FBWM Integration (Linear $\alpha$-PFBWM):} To overcome the limitations of existing parsimonious frameworks of FBWM, we propose a new model called the Linear $\alpha$-cut intervals-based Parsimonious Fuzzy Best-Worst Method (Linear $\alpha$-PFBWM). By directly embedding our proposed Linear $\alpha$-FBWM formulation into its core structure, this framework enables DMs to provide initial ratings of alternatives directly as fuzzy numbers. Furthermore, the priorities of non-reference alternatives are computed using fuzzy interpolation formula, avoiding early defuzzification of the priorities of reference alternatives.
 
  \item \textbf{Real-World Large-Scale Application:} To demonstrate its practical utility, the proposed Linear $\alpha$-PFBWM is applied to a strategic decision-making problem: selecting the ideal warehouse locations from 20 potential sites across Gujarat for a leading multinational paint manufacturing company. By implementing the Linear $\alpha$-PFBWM, the DM only needed to provide 20 initial ratings to these locations and perform just 35 fuzzy pairwise comparisons, instead of the 185 fuzzy pairwise comparisons. This achieves a massive 81.08\% reduction in intensive fuzzy pairwise comparisons, significantly reducing the DM's cognitive load.
\end{itemize}

The rest of the paper is organized as follows:
\begin{itemize}
    \item \textbf{Section 2:} Presents the preliminaries covering fuzzy sets and their arithmetic operations and provides a brief review of $\alpha$-FBWM and parsimonious BWM.

    \item \textbf{Section 3:} Details the proposed mathematical framework, which contains the main contributions of this study: the Linear $\alpha$-FBWM, the uniqueness of the approximated optimal weight set (Theorem \ref{theorem_unique_opt_sol}), its Error of Approximation (EA) bound (Theorem \ref{theorem_EA}), the Ordinal Preference Violation (OPV) metric, and the Linear $\alpha$-PFBWM framework.

    \item \textbf{Section 4:} Presents the comparative analysis and numerical validation, where we compare the proposed Linear $\alpha$-FBWM with the existing $\alpha$-FBWM using examples and use a numerical example to show how to apply the proposed Linear $\alpha$-PFBWM.

   \item \textbf{Section 5:} Presents the industrial application for warehouse location selection using the Linear $\alpha$-PFBWM.

    \item \textbf{Section 6:} Concludes the work, summarizes the principal findings, and presents the limitations and future research directions.

    \item \textbf{Appendix A:} Contains the data collection questionnaire utilized during the expert elicitation process for the industrial warehouse location selection problem.
\end{itemize}

\theoremstyle{definition}
\newtheorem*{definition}{Definition}
\section{Preliminaries}
\subsection{Fuzzy Sets and Arithmetic Operations}

\begin{definition}[\cite{definationsoffuzzyzimmermann2011fuzzy}]
A fuzzy set $\tilde{A}$ over a universal set $X$ is defined by its membership function $\mu_{\tilde{A}}: X \to [0, 1]$, which maps each element $x \in X$ to a real value in the unit interval. Formally, it is expressed as the set of ordered pairs $\tilde{A} = \{(x, \mu_{\tilde{A}}(x)) \mid x \in X\}$, with $\mu_{\tilde{A}}(x)$ indicating the membership degree of $x$.
\end{definition}

\begin{definition}[\cite{definationsoffuzzyzimmermann2011fuzzy}]
 The support of a fuzzy set $\tilde{A}$, denoted by $S(\tilde{A})$, is defined as the crisp subset of the universal set $X$ containing all elements whose membership degree is strictly greater than zero. Formally, $S(\tilde{A}) = \{x \in X \mid \mu_{\tilde{A}}(x) > 0\}$.
\end{definition}

\begin{definition}[\cite{definationsoffuzzyzimmermann2011fuzzy}]
 For any $\alpha \in [0, 1]$, the $\alpha$-cut of a fuzzy set $\tilde{A}$, denoted as $\tilde{A}_\alpha$, is a crisp set comprising all elements in $X$ that have a membership degree greater than or equal to $\alpha$. It is formulated as $\tilde{A}_\alpha = \{x \in X \mid \mu_{\tilde{A}}(x) \geq \alpha\}$.
\end{definition}

\begin{definition}[\cite{definitionoffuzzynumberklir1996fuzzy}]
A fuzzy set $\tilde{A}$ defined on the real line $\mathbb{R}$ is considered a fuzzy number if it satisfies the following three conditions:
\begin{enumerate}
    \item There exists an $x \in \mathbb{R}$ for which $\mu_{\tilde{A}}(x) = 1$;
    \item Every $\alpha$-cut $\tilde{A}_\alpha$ is a closed interval for any $\alpha \in (0, 1]$;
    \item The support $S(\tilde{A})$ is a bounded set.
\end{enumerate}
\end{definition}

\begin{definition}[\cite{ratandhara2024alpha}]
A Triangular Fuzzy Number (TFN) $\tilde{A}$ is a special class of fuzzy numbers parameterized by a triplet $(l, m, u)$, where $l \leq m \leq u$. Its membership function $\mu_{\tilde{A}}(x)$ is formally defined as 
\begin{equation}\label{TFN}
\mu_{\tilde{A}}(x) = 
\begin{cases} 
\frac{x - l}{m - l}, & \text{if } l \leq x \leq m \\
\frac{u - x}{u - m}, & \text{if } m \leq x \leq u \\
0, & \text{otherwise}.
\end{cases}
\end{equation}
The graphical representation of the TFN is illustrated in Figure \ref{fig:tfn}. Furthermore, the $\alpha$-cut interval of the TFN $\tilde{A} = (l, m, u)$ can be explicitly derived by equating the membership function to the threshold $\alpha$, which results in the closed interval $\tilde{A}_\alpha = [a^l_{\alpha}, a^u_{\alpha}] = [l + \alpha(m - l), u - \alpha(u - m)]$.
\end{definition}

\begin{figure}[htbp]
    \centering
    \begin{tikzpicture}[scale=1.2, thick] 
        \draw[->] (-0.5,0) -- (6,0) node[right, font=\footnotesize] {$x$};
        \draw[->] (0,-0.2) -- (0,1.5) node[above, font=\footnotesize] {$\mu_{\tilde{A}}(x)$};
        \draw (-0.1,1) -- (0.1,1) node[left=5pt, font=\footnotesize] {$1$};
        \draw (-0.1,0.6) -- (0.1,0.6) node[left=5pt, font=\footnotesize] {$\alpha$};
        \draw[dashed, gray] (0,1) -- (3,1);
        \draw[blue, very thick] (1.5,0) -- (3,1) -- (4.5,0); 
        \node[blue, above, font=\footnotesize] at (3,1.1) {$\tilde{A} = (l, m, u)$}; 
        \draw[dashed, gray] (3,1) -- (3,0);
        \node[below=2pt, font=\footnotesize] at (1.5,0) {$l$};
        \node[below=2pt, font=\footnotesize] at (3,0) {$m$};
        \node[below=2pt, font=\footnotesize] at (4.5,0) {$u$};
        \draw[dashed, red, thick] (0,0.6) -- (2.4,0.6);
        \draw[red, very thick] (2.4,0.6) -- (3.6,0.6); 
        \draw[dashed, red, thick] (3.6,0.6) -- (5.2,0.6);
        \filldraw[red] (2.4,0.6) circle (1.5pt);
        \filldraw[red] (3.6,0.6) circle (1.5pt);
        \draw[dashed, red] (2.4,0.6) -- (2.4,0);
        \draw[dashed, red] (3.6,0.6) -- (3.6,0);
        \node[below=1pt, red, font=\scriptsize] at (2.4,0) {$a^l_{\alpha}$};
        \node[below=1pt, red, font=\scriptsize] at (3.6,0) {$a^u_{\alpha}$};
        \draw[very thick, red, |-|] (2.4,-0.6) -- (3.6,-0.6) node[midway, below, font=\footnotesize] {$\tilde{A}_\alpha$};
        
    \end{tikzpicture}
   \caption{Graphical representation of a Triangular Fuzzy Number $\tilde{A}$ and its corresponding $\alpha$-cut interval.}

    \label{fig:tfn}
\end{figure}
\begin{definition}[\cite{definationsoffuzzyzimmermann2011fuzzy}]
Let $\tilde{A}$ and $\tilde{B}$ be two strictly positive fuzzy numbers defined over the non-negative real line $\mathbb{R}^+$. For any given membership degree $\alpha \in (0, 1]$, let their respective closed $\alpha$-cut intervals be denoted as $\tilde{A}_\alpha = [a^l_\alpha, a^u_\alpha]$ and $\tilde{B}_\alpha = [b^l_\alpha, b^u_\alpha]$. Furthermore, let $k$ represent any real-valued scalar. The standard interval based algebraic operations between these $\alpha$-cuts are established as follows:
\begin{itemize}
    \item \textbf{Interval Addition:} 
    \begin{equation}\label{addition}
    (\tilde{A} \oplus \tilde{B})_\alpha = [a^l_\alpha + b^l_\alpha, a^u_\alpha + b^u_\alpha]
    \end{equation}
    
    \item \textbf{Interval Subtraction:} 
    \begin{equation}\label{subtraction}
    (\tilde{A} \ominus \tilde{B})_\alpha = [a^l_\alpha - b^u_\alpha, a^u_\alpha - b^l_\alpha]
    \end{equation}
    
    \item \textbf{Interval Multiplication:} 
    \begin{equation}\label{multiplication}
    (\tilde{A} \otimes \tilde{B})_\alpha = [a^l_\alpha \times b^l_\alpha, a^u_\alpha \times b^u_\alpha]
    \end{equation}
    
    \item \textbf{Interval Division:} 
    \begin{equation}\label{Division}
    (\tilde{A} \oslash \tilde{B})_\alpha = \left[ \frac{a^l_\alpha}{b^u_\alpha}, \frac{a^u_\alpha}{b^l_\alpha} \right]
    \end{equation}
    
    \item \textbf{Scalar Multiplication:} The transformation by a scalar $k$ strictly depends on its algebraic sign:
    \begin{equation}\label{scalar_multiplication}
    k \otimes \tilde{A}_\alpha = 
    \begin{cases} 
      [k a^l_\alpha, k a^u_\alpha], & \text{if } k > 0 \\
      [k a^u_\alpha, k a^l_\alpha], & \text{if } k < 0 
    \end{cases}
    \end{equation}
\end{itemize}
\end{definition}

\begin{definition}[\cite{fuzzynumberarithchen1985operations}]
Let $\tilde{A} = (a_1, a_2, a_3)$ and $\tilde{B} = (b_1, b_2, b_3)$ be two TFNs, and let $k$ be a real-valued scalar. The arithmetic operations for these fuzzy numbers are given by:
\begin{itemize}
    \item \textbf{Fuzzy Addition:}
    \begin{equation}\label{F_addition}
    \tilde{A} \oplus \tilde{B} = (a_1 + b_1, a_2 + b_2, a_3 + b_3)
    \end{equation}
    
    \item \textbf{Fuzzy Subtraction:}
    \begin{equation}\label{F_subtraction}
    \tilde{A} \ominus \tilde{B} = (a_1 - b_3, a_2 - b_2, a_3 - b_1)
    \end{equation}
    
    \item \textbf{Fuzzy Multiplication:} 
    \begin{equation}\label{F_Multiplication}
    \tilde{A} \otimes \tilde{B} = \left( \min(a_1 b_1, a_1 b_3, a_3 b_1, a_3 b_3), a_2 b_2, \max(a_1 b_1, a_1 b_3, a_3 b_1, a_3 b_3) \right)
    \end{equation}
    
    \item \textbf{Fuzzy Division:}
    \begin{equation}\label{F_division}
    \tilde{A} \oslash \tilde{B} = \left( \min \left( \frac{a_1}{b_1}, \frac{a_1}{b_3}, \frac{a_3}{b_1}, \frac{a_3}{b_3} \right), \frac{a_2}{b_2}, \max \left( \frac{a_1}{b_1}, \frac{a_1}{b_3}, \frac{a_3}{b_1}, \frac{a_3}{b_3} \right) \right)
    \end{equation}

    \item \textbf{Scalar Multiplication:}
    \begin{equation}\label{T_Scalar_multiplication}
    k \otimes \tilde{A} = 
    \begin{cases} 
      (k a_1, k a_2, k a_3), & \text{if } k \geq 0 \\
      (k a_3, k a_2, k a_1), & \text{if } k < 0 
    \end{cases}
    \end{equation}
\end{itemize}
\end{definition}

\begin{definition}[\cite{guo2017fuzzy}]
For a given Triangular Fuzzy Number $\tilde{A} = (a, b, c)$, its Graded Mean Integration Representation (GMIR), denoted by $R(\tilde{A})$, is evaluated as follows:
\begin{equation}\label{GMIR}
R(\tilde{A}) = \frac{a + 4b + c}{6}
\end{equation}
\end{definition}
\begin{remark}\label{lineartyofR}
Based on the algebraic structure of Eq. (\ref{GMIR}), the GMIR operator $R$ naturally satisfies the property of linearity. For any two TFNs $\tilde{A} = (a_1, b_1, c_1)$ and $\tilde{B} = (a_2, b_2, c_2)$, and a real scalar $k$, the following properties hold:
\begin{enumerate}
    \item \textbf{Additivity:} $R(\tilde{A} + \tilde{B}) = R(\tilde{A}) + R(\tilde{B})$
    \item \textbf{Homogeneity:} $R(k \cdot \tilde{A}) = k \cdot R(\tilde{A})$
\end{enumerate}
\end{remark}

\noindent\textbf{Lipschitz Continuity} (\cite{rudin1976principles}) Let $g: I \rightarrow \mathbb{R}$ be a real-valued function defined on an interval $I \subseteq \mathbb{R}$. The function $g$ is said to be Lipschitz continuous on $I$ if there exists a real constant $M \ge 0$ such that
$$|g(x) - g(y)| \le M|x - y|$$
for all $x, y \in I$. The smallest such constant $M$ is referred to as the Lipschitz constant for the function $g$.

\vspace{0.2cm}

\noindent \textbf{Mean Value Theorem (MVT)}(\cite{rudin1976principles}) Let $f$ be a real-valued function that is continuous on the closed interval $[a, b]$ and differentiable on the open interval $(a, b)$. Then there exists a point $x_0 \in (a, b)$ such that $f'(x_0) = \frac{f(b) - f(a)}{b - a}$.

\subsection{A brief review of the \texorpdfstring{$\alpha$}{alpha}-cut intervals based FBWM (\textbf{\texorpdfstring{$\alpha$}{alpha}-FBWM})} \label{brief_review_alpha-cut intervals FBWM}

 The $\alpha$-cut intervals based FBWM ($\alpha$-FBWM), introduced by Ratandhara and Kumar \cite{ratandhara2024alpha}, provides a systematic framework for deriving a unique weight set for decision criteria. The  core elements and optimization models are defined as follows:

 Let $C=\{c_1, c_2, \dots, c_n\}$ be the set of decision criteria, and let $c_b$ and $c_w$ denote the best and worst criteria, respectively. The Best-to-Other vector 
    $\widetilde{A}_b = (\tilde{a}_{b1}, \tilde{a}_{b2}, \dots, \tilde{a}_{bn})$
and the Other-to-Worst vector 
    $\widetilde{A}_w = (\tilde{a}_{1w}, \tilde{a}_{2w}, \dots, \tilde{a}_{nw})$
are together called the Fuzzy Pairwise Comparison System (FPCS). This is denoted by 
    $(\widetilde{A}_b, \widetilde{A}_w)$. The values of $(\widetilde{A}_b, \widetilde{A}_w)$ are determined according to the linguistic scale presented in Table \ref{tab:linguistic_tfns}.

    \begin{table}[htbp]
    \centering
    \caption{Linguistic terms and associated TFNs \cite{ratandhara2024alpha}.}
    \label{tab:linguistic_tfns}
    \renewcommand{\arraystretch}{1.2}
    \begin{tabular}{ll}
        \hline
        Linguistic term & TFN \\
        \hline
        Equally preference & $\tilde{1} = (1, 1, 1)$ \\
        Weakly preference & $\tilde{3} = (2, 3, 4)$ \\
        Essentially preference & $\tilde{5} = (4, 5, 6)$ \\
        Very strong preference & $\tilde{7} = (6, 7, 8)$ \\
        Absolutely preference & $\tilde{9} = (9, 9, 9)$ \\
        \hline
        \multirow{4}{*}{Intermediate values} & $\tilde{2} = (1, 2, 3)$ \\
        & $\tilde{4} = (3, 4, 5)$ \\
        & $\tilde{6} = (5, 6, 7)$ \\
        & $\tilde{8} = (7, 8, 9)$ \\
        \hline
    \end{tabular}
\end{table}

\vspace{0.2cm}

Let $\widetilde{w}_i = (w_i^l, w_i^m, w_i^u)$ be any Triangular Fuzzy Number (TFN) representing the fuzzy weight of the $i$-th criterion, and let $\tilde{a}_{ij} = (a_{ij}^l, a_{ij}^m, a_{ij}^u)$ denote the relative fuzzy preference of the $i$-th criterion over the $j$-th criterion. Then, for any membership degree $\alpha \in (0, 1]$, their respective $\alpha$-cut intervals are defined as follows:
\begin{align}
(\widetilde{w}_i)_\alpha &= [w_i^l + \alpha(w_i^m - w_i^l), \, w_i^u - \alpha(w_i^u - w_i^m)], \\
(\tilde{a}_{ij})_\alpha &= [a_{ij}^l(\alpha), \, a_{ij}^u(\alpha)] \nonumber \\
&= [a_{ij}^l + \alpha(a_{ij}^m - a_{ij}^l), \, a_{ij}^u - \alpha(a_{ij}^u - a_{ij}^m)],
\end{align}
and for $\alpha = 0$,
\begin{align}
(\widetilde{w}_i)_0 &= [w_i^l, w_i^u], \\
(\tilde{a}_{ij})_0 &= [a_{ij}^l(0), \, a_{ij}^u(0)] = [a_{ij}^l, a_{ij}^u].
\end{align}

\begin{definition}[\cite{ratandhara2024alpha}]
    A weight set $\{\widetilde{w}_1, \widetilde{w}_2, \dots, \widetilde{w}_n\}$ is said to be accurate if it satisfies the following equation
\begin{equation}\label{accurate_weight_set}
\begin{aligned}
\frac{w_b^l + \alpha(w_b^m - w_b^l)}{w_i^u - \alpha(w_i^u - w_i^m)} &= a_{bi}^l(\alpha), & \frac{w_b^u - \alpha(w_b^u - w_b^m)}{w_i^l + \alpha(w_i^m - w_i^l)} &= a_{bi}^u(\alpha), \\[1em]
\frac{w_i^l + \alpha(w_i^m - w_i^l)}{w_w^u - \alpha(w_w^u - w_w^m)} &= a_{iw}^l(\alpha), & \frac{w_i^u - \alpha(w_i^u - w_i^m)}{w_w^l + \alpha(w_w^m - w_w^l)} &= a_{iw}^u(\alpha), \\[1em]
\frac{w_b^l + \alpha(w_b^m - w_b^l)}{w_w^u - \alpha(w_w^u - w_w^m)} &= a_{bw}^l(\alpha), & \frac{w_b^u - \alpha(w_b^u - w_b^m)}{w_w^l + \alpha(w_w^m - w_w^l)} &= a_{bw}^u(\alpha)
\end{aligned}
\end{equation}
  \end{definition}
 for all $\alpha \in [0, 1]$ and $i \in \{1, 2, \dots, n\} \setminus \{b, w\}$.

 \vspace{0.2cm}
 
It is possible that the system of equations (\ref{accurate_weight_set}) may not yield a feasible solution, implying that an accurate weight set does not exist. In such cases, an optimal weight set is determined by solving the following non-linear optimization model:
\begin{equation}\label{nonlinearoptimization}
\begin{aligned}
& \min \epsilon^t \\
& \text{subject to:} \\
& \left| \frac{w_b^l + \alpha(w_b^m - w_b^l)}{w_i^u - \alpha(w_i^u - w_i^m)} - a_{bi}^l(\alpha) \right| \leq \epsilon^t, \qquad 
  \left| \frac{w_b^u - \alpha(w_b^u - w_b^m)}{w_i^l + \alpha(w_i^m - w_i^l)} - a_{bi}^u(\alpha) \right| \leq \epsilon^t, \\[1ex]
& \left| \frac{w_i^l + \alpha(w_i^m - w_i^l)}{w_w^u - \alpha(w_w^u - w_w^m)} - a_{iw}^l(\alpha) \right| \leq \epsilon^t, \qquad
  \left| \frac{w_i^u - \alpha(w_i^u - w_i^m)}{w_w^l + \alpha(w_w^m - w_w^l)} - a_{iw}^u(\alpha) \right| \leq \epsilon^t, \\[1ex]
& \left| \frac{w_b^l + \alpha(w_b^m - w_b^l)}{w_w^u - \alpha(w_w^u - w_w^m)} - a_{bw}^l(\alpha) \right| \leq \epsilon^t, \qquad
  \left| \frac{w_b^u - \alpha(w_b^u - w_b^m)}{w_w^l + \alpha(w_w^m - w_w^l)} - a_{bw}^u(\alpha) \right| \leq \epsilon^t, \\[1ex]
& 0 \leq w_q^l \leq w_q^m \leq w_q^u, \quad \text{(well-definedness and non-negativity of weights)} \\[1ex]
& \sum_{i=1}^n R(\tilde{w}_i) = 1 \quad \text{(normalization)} \\[1ex]
& \text{for } i \in \{1, 2, \dots, n\} \setminus \{b, w\}, \alpha \in [0, 1] \text{ and } q \in \{1, 2, \dots, n\}.
\end{aligned}
\end{equation}

\vspace{0.2cm}

The optimal solution for problem (\ref{nonlinearoptimization}) is represented by $\tilde{w}_i^* = (w_i^{l*}, w_i^{m*}, w_i^{u*})$ and $\epsilon^{t*}$. Here, $\{\tilde{w}_1^*, \tilde{w}_2^*, \dots, \tilde{w}_n^*\}$ constitutes the optimal fuzzy weight set and $\epsilon^{t*}$ indicates the accuracy of this derived weight set.
As can be observed, problem (\ref{nonlinearoptimization}) involves an infinite number of constraints, making it computationally difficult to solve directly. To overcome this difficulty, a finite subset $F = \{0 = \alpha_1, \alpha_2, \dots, \alpha_k = 1\} \subseteq [0,1]$ is considered, such that $\alpha_1 < \alpha_2 < \dots < \alpha_k$. The mesh of $F$ is defined as $\|F\|_\infty = \max\{\alpha_{i+1} - \alpha_i \mid i = 1, 2, \dots, k - 1\}$. Subsequently, the approximate optimal weight set corresponding to the subset $F$ can be determined by solving the following non-linear optimization model:

\begin{equation}\label{F_nonlinearopt}
\begin{aligned}
& \min \epsilon_F^t \\
& \text{subject to:} \\
& \left| \frac{w_b^l + \alpha(w_b^m - w_b^l)}{w_i^u - \alpha(w_i^u - w_i^m)} - a_{bi}^l(\alpha) \right| \leq \epsilon_F^t, \qquad 
  \left| \frac{w_b^u - \alpha(w_b^u - w_b^m)}{w_i^l + \alpha(w_i^m - w_i^l)} - a_{bi}^u(\alpha) \right| \leq \epsilon_F^t, \\[1ex]
& \left| \frac{w_i^l + \alpha(w_i^m - w_i^l)}{w_w^u - \alpha(w_w^u - w_w^m)} - a_{iw}^l(\alpha) \right| \leq \epsilon_F^t, \qquad
  \left| \frac{w_i^u - \alpha(w_i^u - w_i^m)}{w_w^l + \alpha(w_w^m - w_w^l)} - a_{iw}^u(\alpha) \right| \leq \epsilon_F^t, \\[1ex]
& \left| \frac{w_b^l + \alpha(w_b^m - w_b^l)}{w_w^u - \alpha(w_w^u - w_w^m)} - a_{bw}^l(\alpha) \right| \leq \epsilon_F^t, \qquad
  \left| \frac{w_b^u - \alpha(w_b^u - w_b^m)}{w_w^l + \alpha(w_w^m - w_w^l)} - a_{bw}^u(\alpha) \right| \leq \epsilon_F^t, \\[1ex]
& 0 \leq w_q^l \leq w_q^m \leq w_q^u, \quad \text{(well-definedness and non-negativity of weights)} \\[1ex]
& \sum_{i=1}^n R(\tilde{w}_i) = 1 \quad \text{(normalization)} \\[1ex]
& \text{for } i \in \{1, 2, \dots, n\} \setminus \{b, w\}, \alpha \in F \text{ and } q \in \{1, 2, \dots, n\}.
\end{aligned}
\end{equation}

\noindent After solving model (\ref{F_nonlinearopt}), the set $\{\tilde{w}_{1F}^*, \tilde{w}_{2F}^*, \dots, \tilde{w}_{nF}^*\}$ is obtained, which represents the approximate optimal fuzzy weight set corresponding to $F$, while $\epsilon_F^{t*}$ serves as a measure of its accuracy. Consequently, the set $\{R(\tilde{w}_{1F}^*), R(\tilde{w}_{2F}^*), \dots, R(\tilde{w}_{nF}^*)\}$ forms the corresponding defuzzified approximate optimal weight set, where $R(\tilde{w}_{iF}^*)$ denotes the GMIR of $\tilde{w}_{iF}^*$. However, since model (\ref{F_nonlinearopt}) remains a non-linear optimization problem, it may yield multiple approximate optimal weight sets. To address this multiplicity, the optimization models (\ref{GLB_opt_model}) and (\ref{LUB_opt_model}) are solved to determine the Greatest Lower Bound (GLB) and Least Upper Bound (LUB) for the optimal weight of each criterion. These bounds establish an interval, and the average value of this interval is considered the final crisp weight for the respective criterion.

\begin{equation}\label{GLB_opt_model}
\begin{aligned}
& \min R(\tilde{w}_p) =\left( \frac{w_p^l + 4w_p^m + w_p^u}{6} \right) \\
& \text{subject to:} \\
& \left| \frac{w_b^l + \alpha(w_b^m - w_b^l)}{w_i^u - \alpha(w_i^u - w_i^m)} - a_{bi}^l(\alpha) \right| \leq \epsilon_F^{t*}, \qquad 
  \left| \frac{w_b^u - \alpha(w_b^u - w_b^m)}{w_i^l + \alpha(w_i^m - w_i^l)} - a_{bi}^u(\alpha) \right| \leq \epsilon_F^{t*}, \\[1ex]
& \left| \frac{w_i^l + \alpha(w_i^m - w_i^l)}{w_w^u - \alpha(w_w^u - w_w^m)} - a_{iw}^l(\alpha) \right| \leq \epsilon_F^{t*}, \qquad
  \left| \frac{w_i^u - \alpha(w_i^u - w_i^m)}{w_w^l + \alpha(w_w^m - w_w^l)} - a_{iw}^u(\alpha) \right| \leq \epsilon_F^{t*}, \\[1ex]
& \left| \frac{w_b^l + \alpha(w_b^m - w_b^l)}{w_w^u - \alpha(w_w^u - w_w^m)} - a_{bw}^l(\alpha) \right| \leq \epsilon_F^{t*}, \qquad
  \left| \frac{w_b^u - \alpha(w_b^u - w_b^m)}{w_w^l + \alpha(w_w^m - w_w^l)} - a_{bw}^u(\alpha) \right| \leq \epsilon_F^{t*}, \\[1ex]
& 0 \leq w_q^l \leq w_q^m \leq w_q^u, \\[1ex]
& \sum_{i=1}^n R(\tilde{w}_i) = 1 \\[1ex]
& \text{for } i \in \{1, 2, \dots, n\} \setminus \{b, w\}, \alpha \in F \text{ and } q \in \{1, 2, \dots, n\} \text{ and}
\end{aligned}
\end{equation}

\begin{equation}\label{LUB_opt_model}
\begin{aligned}
& \max R(\tilde{w}_p)= \left( \frac{w_p^l + 4w_p^m + w_p^u}{6} \right) \\
& \text{subject to:} \\
& \left| \frac{w_b^l + \alpha(w_b^m - w_b^l)}{w_i^u - \alpha(w_i^u - w_i^m)} - a_{bi}^l(\alpha) \right| \leq \epsilon_F^{t*}, \qquad 
  \left| \frac{w_b^u - \alpha(w_b^u - w_b^m)}{w_i^l + \alpha(w_i^m - w_i^l)} - a_{bi}^u(\alpha) \right| \leq \epsilon_F^{t*}, \\[1ex]
& \left| \frac{w_i^l + \alpha(w_i^m - w_i^l)}{w_w^u - \alpha(w_w^u - w_w^m)} - a_{iw}^l(\alpha) \right| \leq \epsilon_F^{t*}, \qquad
  \left| \frac{w_i^u - \alpha(w_i^u - w_i^m)}{w_w^l + \alpha(w_w^m - w_w^l)} - a_{iw}^u(\alpha) \right| \leq \epsilon_F^{t*}, \\[1ex]
& \left| \frac{w_b^l + \alpha(w_b^m - w_b^l)}{w_w^u - \alpha(w_w^u - w_w^m)} - a_{bw}^l(\alpha) \right| \leq \epsilon_F^{t*}, \qquad
  \left| \frac{w_b^u - \alpha(w_b^u - w_b^m)}{w_w^l + \alpha(w_w^m - w_w^l)} - a_{bw}^u(\alpha) \right| \leq \epsilon_F^{t*}, \\[1ex]
& 0 \leq w_q^l \leq w_q^m \leq w_q^u, \\[1ex]
& \sum_{i=1}^n R(\tilde{w}_i) = 1 \\[1ex]
& \text{for } i \in \{1, 2, \dots, n\} \setminus \{b, w\}, \alpha \in F \text{ and } q \in \{1, 2, \dots, n\},
\end{aligned}
\end{equation}
where $\epsilon_F^*$ is the optimal objective value corresponding to $F$ in model (\ref{F_nonlinearopt}).

\subsection{A brief review of Parsimonious BWM (P-BWM)}

The Parsimonious BWM (P-BWM), refined by Corrente et al. \cite{corrente2024better}, is utilized to efficiently evaluate the priorities of various alternatives while minimizing the cognitive burden of pairwise comparisons. In scenarios involving a large number of alternatives, the systematic procedure for calculating their priorities is outlined in the following steps:

\textbf{Step 1:} Let $A = \{a_1, a_2, \dots, a_n\}$ denote the set of all available alternatives. In this initial step, the DM assigns a specific rating $r(a_j) \in \mathbb{R}$ to each alternative $a_j \in A$. The assigned rating $r(a_j)$ represents the relative goodness or overall performance of alternative $a_j$ in comparison to the others.

\textbf{Step 2:} In this step, the DM selects a predefined set of $t$ reference alternatives, denoted as $(a_{\gamma_1}, a_{\gamma_2}, \dots, a_{\gamma_t})$, from the main set of alternatives $A$. This selection process is formulated and executed by solving the following Mixed Integer Linear Programming (MILP) \cite{corrente2024better} model:
\begin{equation}\label{MILP}
\begin{aligned}
& \max \varepsilon = \varepsilon^*, \quad \text{subject to} \\[1ex]
& \left.
\begin{aligned}
& \rho_j \cdot r(a_j) - \rho_{j'} \cdot r(a_{j'}) \geqslant \varepsilon - (2 - \rho_j - \rho_{j'}) \cdot M \\
& \quad \text{for all } j, j' = 1, \dots, n, \text{ such that } r(a_j) \geqslant r(a_{j'}) \\[1ex]
& \sum_{j=1}^n \rho_j = t, \\
& \rho_{(1)} = 1, \rho_{(n)} = 1, \\
& \rho_{(j)} + \rho_{(j+1)} + \rho_{(j+2)} \leqslant 2, \quad \text{for all } j = 1, \dots, n - 2, \\
& \rho_j \in \{0, 1\} \quad \text{for all } j = 1, \dots, n.
\end{aligned}
\right\}
\end{aligned}
\end{equation}

\textbf{Step 3:} The DM, assisted by the analyst, applies the BWM to the established set of reference alternatives $\{a_{\gamma_1}, \dots, a_{\gamma_t}\}$. Initially, the DM identifies the best alternative $a_B$ and the worst alternative $a_W$ within this subset. Subsequently, pairwise comparisons are conducted between $a_B$, $a_W$, and the remaining reference alternatives to construct the Best-to-Other (BO) and Other-to-Worst (OW) vectors. If the Global Input-based Consistency Ratio ($CR^I$) remains within the acceptable threshold, the priorities of the reference alternatives, denoted as $u(r(a_{\gamma_1})), \dots, u(r(a_{\gamma_t}))$, are computed. Conversely, if the consistency threshold is exceeded, the DM must revise the provided preference information to resolve the inconsistency before reapplying the BWM to derive the priorities. 

\textbf{Step 4:} In this step, the analyst determines the priorities of the non-reference alternatives. For each alternative $a_j \in A$ whose rating falls within the interval $r(a_j) \in [r(a_{\gamma_s}), r(a_{\gamma_{s+1}})]$, its corresponding priority $w_j = u(r(a_j))$ is computed by linearly interpolating the priorities $u(r(a_{\gamma_s}))$ and $u(r(a_{\gamma_{s+1}}))$ obtained in the previous step. This is calculated using the following formula:

\begin{equation}\label{linear_interpolation_formula}
w_j = u(r(a_j)) = u(r(a_{\gamma_s})) + \frac{u(r(a_{\gamma_{s+1}})) - u(r(a_{\gamma_s}))}{r(a_{\gamma_{s+1}}) - r(a_{\gamma_s})} \left( r(a_j) - r(a_{\gamma_s}) \right).
\end{equation}

\section{Proposed Mathematical Framework}
\subsection{Linear Formulation for \texorpdfstring{$\alpha$}{alpha}-cut Intervals based FBWM (Linear \texorpdfstring{$\alpha$}{alpha}-FBWM)}
As established in the previous section \ref{brief_review_alpha-cut intervals FBWM}, model (\ref{nonlinearoptimization}) is inherently non-linear in nature. One of the major drawbacks of such non-linear optimization frameworks is their tendency to yield multiple optimal weight sets \cite{ratandhara2024alpha}. This creates unwanted ambiguity, making it difficult for the DM to finalize a single, reliable optimal weight set. To systematically resolve this critical issue, we reformulate the existing framework as a linear programming model, as presented below:
\begin{equation}\label{linearformulationoptimization}
\begin{aligned}
    & \min \eta^t \\
    & \text{Subject to:} \\
    & \left| w_b^l + \alpha(w_b^m - w_b^l) - a_{bi}^l(\alpha) \cdot \left( w_i^u - \alpha(w_i^u - w_i^m) \right) \right| \le \eta^t \\
    & \left| w_i^l + \alpha(w_i^m - w_i^l) - a_{iw}^l(\alpha) \cdot \left( w_w^u - \alpha(w_w^u - w_w^m) \right) \right| \le \eta^t \\
    & \left| w_b^l + \alpha(w_b^m - w_b^l) - a_{bw}^l(\alpha) \cdot \left( w_w^u - \alpha(w_w^u - w_w^m) \right) \right| \le \eta^t \\
    & \left| w_b^u - \alpha(w_b^u - w_b^m) - a_{bi}^u(\alpha) \cdot \left( w_i^l + \alpha(w_i^m - w_i^l) \right) \right| \le \eta^t \\
    & \left| w_i^u - \alpha(w_i^u - w_i^m) - a_{iw}^u(\alpha) \cdot \left( w_w^l + \alpha(w_w^m - w_w^l) \right) \right| \le \eta^t \\
    & \left| w_b^u - \alpha(w_b^u - w_b^m) - a_{bw}^u(\alpha) \cdot \left( w_w^l + \alpha(w_w^m - w_w^l) \right) \right| \le \eta^t \\
    & 0 \le w_q^l \le w_q^m \le w_q^u, \\
    & \sum_{i=1}^n R(\tilde{w}_i) = 1 \\
    & \text{for } i \in \{1, 2, \dots, n\} \setminus \{b, w\}, \quad \alpha \in [0, 1] \quad \text{and} \quad q \in \{1, 2, \dots, n\}.
\end{aligned}
\end{equation}

Here, the optimal solution of model (\ref{linearformulationoptimization}) is given by $\tilde{w}^*_i = (w^{l*}_i, w^{m*}_i, w^{u*}_i)$ and $\eta^{t*}$, where $\{\tilde{w}^*_1, \tilde{w}^*_2, \dots, \tilde{w}^*_n\}$ is the optimal triangular fuzzy weight set (referred to as the optimal weight set in this article), while the value of $\eta^{t*}$ serves as a direct indicator of the accuracy and consistency of this optimal weight set.

\subsection{Uniqueness and Error of Approximation Bound of the Optimal Weight Set}

However, a significant computational challenge arises when attempting to solve model (\ref{linearformulationoptimization}) directly. Since $\alpha$ can take any value within the interval $[0, 1]$, the optimization model contains an infinite number of constraints, making it intractable in its original form. 
To systematically address this intractability, the interval $[0, 1]$ is divided into a finite partition $F = \{\alpha_1, \alpha_2, \dots, \alpha_k\}$, such that $0 = \alpha_1 < \alpha_2 < \dots < \alpha_k = 1$.
 
Let $\|F\|_\infty = \max \{\alpha_{i+1} - \alpha_i \mid i = 1, 2, \dots, k - 1\}$, where $\|F\|_\infty$ is called the mesh of $F$. By employing this partition $F$, the infinite constraints are reduced to a finite number of workable boundaries. Consequently, the approximate optimal weight set corresponding to the partition $F$ can be determined by solving the following linear programming model:
\begin{equation}\label{F_linearformulation}
\begin{aligned}
    & \min \eta_F^t \\
    & \text{Subject to:} \\
    & \left| w_b^l + \alpha \cdot (w_b^m - w_b^l) - a_{bi}^l(\alpha) \cdot \left( w_i^u - \alpha(w_i^u - w_i^m) \right) \right| \le \eta_F^t \\
    & \left| w_i^l + \alpha \cdot (w_i^m - w_i^l) - a_{iw}^l(\alpha) \cdot \left( w_w^u - \alpha(w_w^u - w_w^m) \right) \right| \le \eta_F^t \\
    & \left| w_b^l + \alpha \cdot (w_b^m - w_b^l) - a_{bw}^l(\alpha) \cdot \left( w_w^u - \alpha(w_w^u - w_w^m) \right) \right| \le \eta_F^t \\
    & \left| w_b^u - \alpha\cdot(w_b^u - w_b^m) - a_{bi}^u(\alpha) \cdot \left( w_i^l + \alpha(w_i^m - w_i^l) \right) \right| \le \eta_F^t \\
    & \left| w_i^u - \alpha\cdot(w_i^u - w_i^m) - a_{iw}^u(\alpha) \cdot \left( w_w^l + \alpha(w_w^m - w_w^l) \right) \right| \le \eta_F^t \\
    & \left| w_b^u - \alpha\cdot(w_b^u - w_b^m) - a_{bw}^u(\alpha) \cdot \left( w_w^l + \alpha(w_w^m - w_w^l) \right) \right| \le \eta_F^t \\
    & 0 \le w_q^l \le w_q^m \le w_q^u, \\
    & \sum_{i=1}^n R(\tilde{w}_i) = 1 \\
    & \text{for } i \in \{1, 2, \dots, n\} \setminus \{b, w\}, \quad \alpha \in F \quad \text{and} \quad q \in \{1, 2, \dots, n\}.
\end{aligned}
\end{equation}

Note that model (\ref{F_linearformulation}) is a minimization problem with $3n + 1$ variables. Solving this model yields an optimal solution denoted by $(\tilde{w}_{1F}^*, \tilde{w}_{2F}^*, \dots, \tilde{w}_{nF}^*,  \eta_F^{t^*})$. Within this solution, the set $\{\tilde{w}_{1F}^*, \tilde{w}_{2F}^*, \dots, \tilde{w}_{nF}^*\}$ represents the approximate optimal weight set derived for the specified partition $F$, while the optimal value $\eta_{F}^{t*}$ serves as a direct indicator of its accuracy and consistency. For notational simplicity, the subscript $F$ may be omitted in subsequent sections when the choice of partition is unambiguous.

\begin{theorem}\label{theorem_unique_opt_sol}
For any given partition $F = \{\alpha_1, \alpha_2, \dots, \alpha_k\} \subset [0, 1]$ such that $0 = \alpha_1 < \alpha_2 < \dots < \alpha_k = 1$, the linear programming model (\ref{F_linearformulation}) yields a unique optimal solution.
\end{theorem}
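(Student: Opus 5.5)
The plan is to treat model (\ref{F_linearformulation}) as a finite linear program and use the fundamental theorem of linear programming. First I would rewrite every absolute-value constraint $|L(\alpha)|\le\eta_F^t$ as the pair of linear inequalities $L(\alpha)\le\eta_F^t$ and $-L(\alpha)\le\eta_F^t$. Since $F$ is finite and each $a^l_{ij}(\alpha),a^u_{ij}(\alpha)$ is a fixed number for each $\alpha\in F$, this gives finitely many linear constraints in the $3n+1$ variables $(w_q^l,w_q^m,w_q^u)_q$ and $\eta_F^t$. The feasible region is therefore a closed polyhedron.

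\emph{Feasibility.} I would exhibit an explicit feasible point. Take every weight to be the crisp TFN $\tilde w_q=(1/n,1/n,1/n)$. By Remark \ref{lineartyofR} (or directly from Eq. (\ref{GMIR})) each $R(\tilde w_q)=1/n$, so the normalization holds, and the order constraints $0\le w_q^l\le w_q^m\le w_q^u$ hold trivially. Then set $\eta_F^t$ equal to the largest of the finitely many numbers $|L(\alpha)|$ evaluated at these weights.

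\emph{Boundedness.} Each constraint forces $\eta_F^t\ge |L(\alpha)|\ge 0$, so the objective is bounded below by $0$ on the feasible set. A feasible LP with objective bounded below attains its minimum, so an optimal solution exists. The minimal value $\eta_F^{t*}$ is then uniquely determined as the infimum of a bounded-below set of reals.

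\emph{Uniqueness of the minimizer.} This is the step I expect to be the main obstacle. The set of optimal solutions is the face $P\cap\{\eta_F^t=\eta_F^{t*}\}$, which is convex, so in general it need not be a single point. I would first try to show that this face is zero-dimensional:
\begin{itemize}
\item Suppose two optimal weight sets exist and consider the direction $d$ between them.
\item Every constraint that is active at an interior point of the segment must satisfy $L(\alpha)\cdot d=0$ along $d$.
\item Using the constraints at $\alpha=0$ and $\alpha=1$, which involve $w^l,w^u$ and $w^m$ separately, I would argue that the active best-to-other and other-to-worst constraints tie every $d_i^{\cdot}$ linearly to $d_b^{\cdot}$ and $d_w^{\cdot}$, with coefficients $a_{bi}(\alpha)>0$ and $a_{iw}(\alpha)>0$.
\item The normalization $\sum R(\tilde w_i)=1$ then forces $\sum R(d_i)=0$, and positivity of these coefficients would yield $d=0$.
\end{itemize}
Making the "active constraints" claim rigorous is the delicate part, since it needs enough constraints to be active at the optimum. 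If it cannot be established in full generality, I would state the result in the defensible form: the optimal value $\eta_F^{t*}$ is unique and the minimizer is unique under a nondegeneracy condition on the active set. Simplex-type solvers then return a single vertex in either case.
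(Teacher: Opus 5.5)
Your existence argument is correct, and more accurate than the paper's: the paper calls the feasible region a bounded polytope, but $\eta_F^t$ is unbounded above, so ``objective bounded below by $0$'' is the right reason an optimum exists. The gap is the uniqueness of the minimizer, and it cannot be closed. The last step of your sketch fails even in the most favourable case, where \emph{every} constraint is active. Take $n=3$, criteria $b,i,w$, $\tilde a_{bi}=\tilde a_{iw}=(1,2,4)$, $\tilde a_{bw}=(2,4,8)$ and $F=\{0,1\}$. For any $s\le t\le 2s$ with $3s+4t=6/7$, put $\tilde w_w=(s,t,2s)$, $\tilde w_i=2\tilde w_w$ and $\tilde w_b=4\tilde w_w$. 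At $\alpha=0$ the six deviations of model (\ref{F_linearformulation}) are $4s-4s$, $2s-2s$, $4s-2\cdot 2s$, $8s-4\cdot 2s$, $4s-4\cdot s$ and $8s-8\cdot s$. At $\alpha=1$ they are $4t-2\cdot 2t$, $2t-2\cdot t$ and $4t-4\cdot t$. All of these are zero, the order constraints hold, and $\sum_q R(\tilde w_q)=7(3s+4t)/6=1$. Hence $\eta_F^{t*}=0$ is attained on a whole segment, for example at $\tilde w_w=(6/49,6/49,12/49)$ and at $\tilde w_w=(6/77,12/77,12/77)$. The difference $d$ of these two solutions has $d^l,d^u<0<d^m$ in every criterion and $\sum_q R(d_q)=0$. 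So ``positivity of the coefficients plus normalization forces $d=0$'' is false. The reason is structural: at $\alpha=0$ the constraints involve only the $(l,u)$-components and at $\alpha=1$ only the $m$-components. The active system therefore fixes these two blocks only up to two independent scale factors, and the single normalization removes just one of them.

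The paper's own proof does not supply the missing idea either. Its third step argues that, since the gradient of $\eta_F^t$ is linearly independent of the constraint gradients, the objective cannot be parallel to a face. That is not a uniqueness criterion. The optimal set is the entire slice of the feasible polyhedron at $\eta_F^t=\eta_F^{t*}$, the objective is constant along every direction in that slice, and the instance above is one where the slice is a segment. Its data lie outside Table~\ref{tab:linguistic_tfns}, but Theorem~\ref{theorem_unique_opt_sol} does not assume the scale, and neither your argument nor the paper's uses it. So no argument of this kind can prove the statement as written.

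What holds in general is what you actually established: an optimum exists and $\eta_F^{t*}$ is unique. A solver returning one vertex is not a uniqueness result. Uniqueness of the weights needs extra structure. One case that works is $F=\{0,1\}$ with $\eta_F^{t*}>0$. There, rescaling by $1/\eta_F^{t*}$ identifies the optimal weight sets with the maximizers of $\sum_q R(\tilde w_q)$ over the set where all deviations are at most $1$. Every inequality in that set has the form $c_1u-c_2v\le d$ with $c_1,c_2\ge 0$, so the compact set is closed under componentwise maxima. Its greatest element is then the unique maximizer of the strictly increasing objective. With interior points $\alpha\in F$ this argument breaks down, because a single constraint then contains $w^l$ and $w^m$ (or $w^u$ and $w^m$) of the same criterion with coefficients of the same sign.
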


\begin{proof}
To establish this theorem, the fundamental properties of linear programming are utilized. The proof is structured into three distinct phases:

\begin{enumerate}
    \item The feasible region is non-empty: The objective function of model (\ref{F_linearformulation}) aims to minimize the error variable $\eta_F^t$. In the formulated constraints, $\eta_F^t$ serves as an upper bound for the absolute deviations. For any arbitrary set of fuzzy weights that satisfy both the well-definedness conditions and the normalization constraint, one can always select a sufficiently large positive value for $\eta_F^t$ such that all absolute deviation inequality constraints are simultaneously satisfied. Since at least one such feasible point invariably exists, the feasible region is non-empty.

    \item The feasible region is bounded: The variables representing the fuzzy weights are strictly bounded within a compact space. First, they are constrained from below by zero due to the non-negativity and well-definedness conditions, $0 \le w_q^l \le w_q^m \le w_q^u$. Second, they are upper-bounded because the sum of their GMIR values must equal unity, as dictated by the normalization constraint $\sum_{i=1}^n R(\tilde{w}_i) = 1$. As all variables are between zero and a finite upper bound, the feasible region forms a closed and bounded polytope.

    \item The optimal solution is unique: According to the fundamental theorem of linear programming \cite{linearprogrammingmodel}, a linear program defined over a non-empty, bounded feasible region attains its optimum at an extreme point (vertex) of the polytope. In model (\ref{F_linearformulation}), the objective function is to minimize the single variable $\eta_F^t$. The gradient of this objective hyperplane is linearly independent of the gradients of the bounding constraint hyperplanes. Consequently, the objective hyperplane cannot be parallel to any face or edge of the feasible region, eliminating the possibility of an infinite number of solutions. It intersects the polytope at exactly one unique vertex, guaranteeing a single, unique optimal solution.
\end{enumerate}
This completes the proof.
\end{proof}

Next, we establish the Error of Approximation (EA) bound for the approximate optimal weight set derived from model (\ref{F_linearformulation}).

\begin{theorem}\label{theorem_EA}
Let $(\tilde{A}_b, \tilde{A}_w)$ be an FPCS having the comparison values from the scale given in Table \ref{tab:linguistic_tfns}. Let $F = \{\alpha_1, \alpha_2, \dots, \alpha_k\} \subset [0, 1]$ such that 
$0 = \alpha_1 < \alpha_2 < \dots < \alpha_k = 1.$
Let $\eta^{t^*}$ be the optimal objective value of the model (\ref{linearformulationoptimization}) and $\eta_F^{t^*}$ be the optimal objective value of the model (\ref{F_linearformulation}) corresponding to $F$. Then there exists a positive constant $M$ such that $|\eta^{t^*} - \eta_F^{t^*}| \leq M \|F\|_{\infty}.$
\end{theorem}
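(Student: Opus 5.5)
Since $F\subset[0,1]$, the model (\ref{F_linearformulation}) is a relaxation of the model (\ref{linearformulationoptimization}): it has the same variables and normalization/well-definedness constraints, but only finitely many of the $\alpha$-indexed constraints. Hence $\eta_F^{t*}\le \eta^{t*}$, and it suffices to prove the one-sided bound $\eta^{t*}-\eta_F^{t*}\le M\|F\|_\infty$. To do this I take an optimal solution $(\tilde w_{1F}^*,\dots,\tilde w_{nF}^*,\eta_F^{t*})$ of (\ref{F_linearformulation}), which exists by Theorem \ref{theorem_unique_opt_sol}. I then show that the same fuzzy weights, paired with error level $\eta_F^{t*}+M\|F\|_\infty$, are feasible for (\ref{linearformulationoptimization}). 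By minimality this gives $\eta^{t*}\le \eta_F^{t*}+M\|F\|_\infty$.

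For feasibility, fix the weights and write each constraint residual as a function of $\alpha$. For example, set $g_{bi}^l(\alpha)=w_b^l+\alpha(w_b^m-w_b^l)-a_{bi}^l(\alpha)\bigl(w_i^u-\alpha(w_i^u-w_i^m)\bigr)$, and define the other five families analogously. Each $g$ is a polynomial of degree at most two in $\alpha$, since $a^l_{bi}(\alpha)$ and $a^u_{bi}(\alpha)$ are affine in $\alpha$. Hence each $g$ is differentiable on $[0,1]$. Now take any $\alpha\in[0,1]$. There is a node $\alpha_j\in F$ with $|\alpha-\alpha_j|\le\|F\|_\infty$; in fact $\|F\|_\infty/2$ suffices, but the cruder bound is enough. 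By the Mean Value Theorem,
\[
|g(\alpha)|\le |g(\alpha_j)|+\sup_{[0,1]}|g'|\,|\alpha-\alpha_j|\le \eta_F^{t*}+L\,\|F\|_\infty ,
\]
where $L$ is a Lipschitz constant of $g$ on $[0,1]$. Taking $M$ to be the maximum of these Lipschitz constants over the finitely many constraint families and indices $i$ gives the claim.

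The main obstacle is to make $M$ a genuine constant that does not depend on the particular optimal weights, so I will bound $|g'|$ uniformly. Differentiating $g_{bi}^l$ gives
\[
(g_{bi}^l)'(\alpha)=(w_b^m-w_b^l)-(a_{bi}^m-a_{bi}^l)\bigl(w_i^u-\alpha(w_i^u-w_i^m)\bigr)+a_{bi}^l(\alpha)(w_i^u-w_i^m),
\]
with analogous expressions for the other families. For the weights, the constraints $0\le w_q^l\le w_q^m\le w_q^u$ together with $\sum_q R(\tilde w_q)=1$ imply $w_q^u\le 6$ and $w_q^m\le 3/2$ for every $q$. So all weight differences are bounded by $6$. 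For the comparison values, every TFN in Table \ref{tab:linguistic_tfns} has entries in $[1,9]$ and spreads $m-l,\,u-m\le 1$. Hence $|a(\alpha)|\le 9$ and $|a'(\alpha)|\le 1$. Each derivative is therefore bounded by an explicit constant, roughly $6+6+9\cdot 6=66$, and taking $M$ as the maximum of these bounds completes the argument.
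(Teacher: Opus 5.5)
Your proposal is correct and follows the paper's argument: since (\ref{F_linearformulation}) is a relaxation, $\eta_F^{t*}\le\eta^{t*}$, and the Mean Value Theorem applied to the quadratic residuals shows that the $F$-optimal weights are feasible for (\ref{linearformulationoptimization}) with error $\eta_F^{t*}+M\|F\|_\infty$. Your uniform bound on $|g'|$ is a genuine improvement. You use $w_q^u\le 6$ and $w_q^m\le 3/2$, which follow from $\sum_q R(\tilde w_q)=1$. The paper's proof lets $M$ depend on the particular optimal weights, and hence on $F$. Its Remark~\ref{Remark_upper_bound} then gets $M\le 11$ by assuming all weights and spreads are at most $1$, which the normalization constraint alone does not guarantee.
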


\begin{proof}
    Since $F \subset [0, 1]$, the model (\ref{F_linearformulation}) possesses a subset of the constraints found in the model (\ref{linearformulationoptimization}). Consequently, it holds that $\eta_F^{t^*} \leq \eta^{t^*}$. 
Let $\{ \tilde{w}_{1F}^*, \tilde{w}_{2F}^*, \dots, \tilde{w}_{nF}^* \}$ represent the approximate optimal weight set computed with respect to the partition $F$. By definition, for any $\alpha_j \in F$, all inequality constraints in model (\ref{F_linearformulation}) are satisfied and bounded by the optimal error $\eta_F^{t^*}$.

\noindent So we have,
\begin{align*}
\max \Big\{ &| (w_{bF}^{l*} + \alpha_j(w_{bF}^{m*} - w_{bF}^{l*})) - a_{bi}^{l}(\alpha_j) \cdot (w_{iF}^{u*} - \alpha_j(w_{iF}^{u*} - w_{iF}^{m*})) |, \\
&| (w_{bF}^{u*} - \alpha_j(w_{bF}^{u*} - w_{bF}^{m*})) - a_{bi}^{u}(\alpha_j) \cdot (w_{iF}^{l*} + \alpha_j(w_{iF}^{m*} - w_{iF}^{l*})) |, \\
&| (w_{iF}^{l*} + \alpha_j(w_{iF}^{m*} - w_{iF}^{l*})) - a_{iw}^{l}(\alpha_j) \cdot (w_{wF}^{u*} - \alpha_j(w_{wF}^{u*} - w_{wF}^{m*})) |, \\
&| (w_{iF}^{u*} - \alpha_j(w_{iF}^{u*} - w_{iF}^{m*})) - a_{iw}^{u}(\alpha_j) \cdot (w_{wF}^{l*} + \alpha_j(w_{wF}^{m*} - w_{wF}^{l*})) |, \\
&| (w_{bF}^{l*} + \alpha_j(w_{bF}^{m*} - w_{bF}^{l*})) - a_{bw}^{l}(\alpha_j) \cdot (w_{wF}^{u*} - \alpha_j(w_{wF}^{u*} - w_{wF}^{m*})) |, \\
&| (w_{bF}^{u*} - \alpha_j(w_{bF}^{u*} - w_{bF}^{m*})) - a_{bw}^{u}(\alpha_j) \cdot (w_{wF}^{l*} + \alpha_j(w_{wF}^{m*} - w_{wF}^{l*})) | \\
&: i = 1, 2, \dots, n, i \neq b, i \neq w, \alpha_j \in F \Big\} = \eta_F^{t^*}
\end{align*}

$\leq \eta_F^{t^*} + M \|F\|_{\infty} \leq \eta^{t^*} + M \|F\|_{\infty}, \text{ for any } M > 0.$  
Hence, for all $\alpha_j \in F$, the absolute deviations do not exceed $\eta^{t^*} + M \|F\|_{\infty}$. 
\par
Now, consider any $\alpha \in [0, 1] \setminus F$. There exist two consecutive partition points $\alpha_j, \alpha_{j+1} \in F$ such that $\alpha_j < \alpha < \alpha_{j+1}$.   
Let $g_1(\alpha) = (w_{bF}^{l*} + \alpha(w_{bF}^{m*} - w_{bF}^{l*})) - a_{bi}^{l}(\alpha) \cdot (w_{iF}^{u*} - \alpha(w_{iF}^{u*} - w_{iF}^{m*}))$. Since $(w_{bF}^{l*} + \alpha(w_{bF}^{m*} - w_{bF}^{l*}))$, $(w_{iF}^{u*} - \alpha(w_{iF}^{u*} - w_{iF}^{m*}))$ and $a_{bi}^{l}(\alpha)$ are all linear functions of $\alpha \in [0, 1]$, the function $g_1(\alpha)$ is a quadratic polynomial in $\alpha$. Therefore $g_1(\alpha)$ is continuous and differentiable on $[0, 1]$. By the mean value theorem, for any $\alpha_j$ and $\alpha$ there exists a point $c \in (\alpha_j, \alpha)$ such that $|g_1(\alpha) - g_1(\alpha_j)| = |g'_1(c)| \cdot |\alpha - \alpha_j|$. Since the domain $[0, 1]$ is a closed and bounded interval, and $g'_1(\alpha)$ is a continuous linear function, the absolute value of the derivative is bounded by some finite constant $M_1$; that is $|g'_1(\alpha)| \leq M_1$ for all $\alpha \in [0, 1]$. This shows that $g_1(\alpha)$ is Lipschitz continuous with constant $M_1$, therefore $|g_1(\alpha) - g_1(\alpha_j)| \leq M_1 \cdot |\alpha - \alpha_j| \leq M_1 \cdot |\alpha_{j+1} - \alpha_j|$.
Since $\alpha_j < \alpha < \alpha_{j+1}$ the distance $|\alpha_{j+1} - \alpha_j|$ is strictly less than the mesh size $\|F\|_{\infty}$. Thus  $|g_1(\alpha) - g_1(\alpha_j)| \leq M_1 \|F\|_{\infty}$. Using the triangle inequality, $|g_1(\alpha)| - |g_1(\alpha_j)| \leq |g_1(\alpha) - g_1(\alpha_j)| \leq M_1 \|F\|_{\infty}$, therefore
$|g_1(\alpha)| \leq |g_1(\alpha_j)| + M_1 \|F\|_{\infty}$. Since $|g_1(\alpha_j)| \leq \eta_F^{t^*}$, we have $|(w_{bF}^{l*} + \alpha(w_{bF}^{m*} - w_{bF}^{l*})) - a_{bi}^{l}(\alpha) \cdot (w_{iF}^{u*} - \alpha(w_{iF}^{u*} - w_{iF}^{m*}))| \leq \eta_F^{t^*} + M_1 \|F\|_{\infty}$. By applying the same Lipschitz continuity argument to the five remaining absolute deviation constraints, we can determine finite bounding constants $M_2, M_3, \dots, M_6$. Defining $M = \max\{M_1, M_2, \dots, M_6\}$, the maximum deviation across all constraints for any $\alpha \in [0, 1]$ does not exceed $\eta_F^{t*} + M \|F\|_{\infty}$.
This implies that the optimal weight set from the model (\ref{F_linearformulation}) is a feasible solution to the model (\ref{linearformulationoptimization}) with an error of at most $\eta_F^{t^*} + M\|F\|_{\infty}$. Hence, the optimal error of the model (\ref{linearformulationoptimization}), $\eta^{t^*}$ must satisfy
\[
\eta^{t^*} \leq \eta_F^{t^*} + M\|F\|_{\infty}
\]
\[
\therefore \quad 0 \leq \eta^{t^*} - \eta_F^{t^*} \leq M\|F\|_{\infty}
\]
\[
\therefore \quad |\eta^{t^*} - \eta_F^{t^*}| \leq M\|F\|_{\infty}.
\]
This completes the proof.
\end{proof}

\begin{remark}\label{Remark_upper_bound}
The existence of a finite constant $M$ in Theorem \ref{theorem_EA} is crucial as it ensures that the approximation error between the model (\ref{linearformulationoptimization}) and model (\ref{F_linearformulation}) remains bounded. Consequently, we can establish a strict upper bound for this constant based on the inherent properties of the linguistic fuzzy pairwise comparison scale in Table~\ref{tab:linguistic_tfns}. 

Since, $$g_1(\alpha) = (w_{bF}^{l*} + \alpha(w_{bF}^{m*} - w_{bF}^{l*})) - a_{bi}^l(\alpha) \cdot (w_{iF}^{u*} - \alpha(w_{iF}^{u*} - w_{iF}^{m*})).$$
Differentiating $g_1(\alpha)$ with respect to $\alpha$, and applying the triangle inequality, the absolute rate of change is upper-bounded by:

$$|g_1'(\alpha)| \le (w_{bF}^{m*} - w_{bF}^{l*}) + (a_{bi}^m - a_{bi}^l) \cdot(w_{iF}^{u*} - \alpha(w_{iF}^{u*} - w_{iF}^{m*}) + a_{bi}^l(\alpha) \cdot (w_{iF}^{u*} - w_{iF}^{m*}).$$

To determine the absolute upper bound over the interval $\alpha \in [0, 1]$, we analyze the monotonic behavior of the functions involved. Since $(w_{iF}^{u*} - \alpha(w_{iF}^{u*} - w_{iF}^{m*})$ is a decreasing function of $\alpha$, its maximum value occurs at $\alpha = 0$, yielding $w_{iF}^{u*}$. Conversely, $a_{bi}^l(\alpha)$ is an increasing function, reaching its maximum at $\alpha = 1$, which gives $a_{bi}^l(1) = a_{bi}^m$. Substituting these maximum values into the equation above gives the exact bound:
$$|g_1'(\alpha)| \le (w_{bF}^{m*} - w_{bF}^{l*}) + (a_{bi}^m - a_{bi}^l) \cdot w_{iF}^{u*} + a_{bi}^m \cdot (w_{iF}^{u*} - w_{iF}^{m*}).$$

For any normalized fuzzy weight, the absolute maximum value and its spread are bounded by $1$. Furthermore, based on the standard 1-9 linguistic scale in Table~\ref{tab:linguistic_tfns} utilized in this study, the maximum spread of any TFN is $1$ (e.g., $a_{bi}^m - a_{bi}^l \le 1$), and the maximum value of $a_{bi}^m$ is $9$. Substituting these extreme values into the inequality yields:
$$|g_1'(\alpha)| \le 1 + (1)(1) + 9(1) = 11.$$

By applying the same argument and symmetry to the remaining constraint functions $g_k(\alpha)$ for $k = 2, 3, 4, 5, 6$, it can be systematically proven that they are all subject to the same upper limit. Thus, a universal finite upper bound $M \le 11$ exists. This bound theoretically confirms that the approximation error $|\eta^{t*} - \eta_F^{t*}|$ is bounded by $11 \|F\|_\infty$.
\end{remark}

\begin{remark}
\label{rem:natural_extension}
As the traditional $\alpha$-FBWM was a natural extension of the non-linear BWM \cite{ratandhara2024alpha}, our proposed Linear $\alpha$-FBWM is a natural extension of the linear model of BWM.
\end{remark}

\subsection{Novel Cognitive-Based Ordinal Preference Violation (OPV) Metric}

One of the most important properties of an MCDM method is to preserve the ordinal preferences of the DM \cite{ordinalviolation}. To check how well the final derived defuzzified weights from any FBWM maintain the initial fuzzy preference vectors (Best-to-Others and Others-to-Worst), we introduce a novel metric called the Ordinal Preference Violation (OPV). This metric provides a reliable way to systematically measure this specific structural consistency by tracking logical contradictions between computed rankings and initial inputs.

In decision-making psychology, the prominence effect \cite{Prominenceeffect} represents a well-documented phenomenon. According to this concept, during the evaluation of multi-attribute alternatives, cognitive attention naturally shifts toward the most prominent or important attribute. Consequently, this dominant dimension receives the highest focus throughout the decision-making process. Conversely, criteria of lesser importance receive significantly less mental focus, which tends to amplify uncertainty and decision conflict during their evaluation. Because DMs typically exhibit higher confidence and precision when comparing criteria against the most important (Best) criterion, contradicting the Best-to-Others ($\tilde{A}_B$) vector constitutes a more severe structural violation than contradicting the Others-to-Worst ($\tilde{A}_W$) vector. 

To model this asymmetric cognitive behavior, the proposed framework evaluates separate localized discrepancy scores for the $\tilde{A}_B$ and $\tilde{A}_W$ preference orders. These discrepancy scores are primarily triggered when a strict ranking order exists between the final derived criteria weights (i.e., $w_j > w_i$). Additionally, to ensure a comprehensive evaluation, the metric assesses scenarios where the derived weights are equal (i.e., $w_j = w_i$) yet conflict with the initial preferences. Under these conditions, the original fuzzy vectors are inspected for potential structural reversals.

First, the individual discrepancy score for violating the Best-to-Others vector ($V_{ij}^B$) is formulated as follows:
\begin{equation}\label{BO_discrepancy}
    V_{ij}^B = 
    \begin{cases} 
    1, & \text{if } w_j > w_i \text{ and } R(\tilde{a}_{Bj}) > R(\tilde{a}_{Bi}) \\
    0.5, & \text{if } w_j > w_i \text{ and } R(\tilde{a}_{Bj}) = R(\tilde{a}_{Bi}) \\
    0.5, & \text{if } w_j = w_i, R(\tilde{a}_{Bj}) \neq R(\tilde{a}_{Bi}), \text{ and } j > i \\
    0, & \text{otherwise}
    \end{cases}
\end{equation}

Here, $R(\cdot)$ denotes the GMIR value of the corresponding TFN. A discrepancy score of $1$ represents a full ordinal contradiction, whereas $0.5$ indicates a partial contradiction. Such partial contradictions occur either when a tie exists in the defuzzified initial preferences despite strictly ordered final weights, or when the final weights are tied despite strictly ordered GMIR values of the initial preferences. To avoid double-counting symmetric criteria pairs in the equal-weight case, the constraint $j > i$ is used.

Similarly, the individual discrepancy score for violating the Others-to-Worst vector ($V_{ij}^W$) is formulated as follows:
\begin{equation}\label{OW_discrepancy}
    V_{ij}^W = 
    \begin{cases} 
    0.5, & \text{if } w_j > w_i \text{ and } R(\tilde{a}_{jW}) < R(\tilde{a}_{iW}) \\
    0.25, & \text{if } w_j > w_i \text{ and } R(\tilde{a}_{jW}) = R(\tilde{a}_{iW}) \\
    0.25, & \text{if } w_j = w_i, R(\tilde{a}_{jW}) \neq R(\tilde{a}_{iW}), \text{ and } j > i \\
    0, & \text{otherwise}
    \end{cases}
\end{equation}
In this context, the maximum individual discrepancy score is scaled down to 0.5 to reflect the lower cognitive clarity and higher evaluation uncertainty inherently associated with the $\tilde{A}_W$ vector variations. 

Finally, by utilizing these localized discrepancy components, the overall combined metric is constructed. To ensure that the metric remains scale-independent, the sum of all individual violations is normalized by its theoretical maximum upper bound. For a problem with $n$ criteria, the maximum possible aggregated discrepancy is derived as $\frac{3n(n-1)}{4}$. Thus, the final Ordinal Preference Violation (OPV) metric is evaluated using the following formula:
\begin{equation}\label{OPV}
    OPV = \frac{4}{3n(n-1)} \sum_{i=1}^{n} \sum_{j=1}^{n} \left( V_{ij}^B + V_{ij}^W \right)
\end{equation}
By incorporating this normalization factor, the aggregated OPV score lies within the interval $[0, 1]$, where an OPV of $0$ indicates flawless ordinal consistency and a score of $1$ signifies a complete preference reversal relative to the DM's initial inputs.

\subsection{Proposed Linear \texorpdfstring{$\alpha$}{alpha}-cut intervals based Parsimonious FBWM with fuzzy rating (Linear \texorpdfstring{$\alpha$}{alpha}-PFBWM)}

In the Fuzzy Best-Worst Method (FBWM), when the number of alternatives becomes very large, it becomes very difficult for Decision-Makers (DMs) to provide accurate fuzzy pairwise comparison vectors. This creates a heavy cognitive load on the DMs, which can easily lead to significant inconsistency in the final priorities. To solve this problem and ensure that DMs experience a much lower cognitive load, this section proposes a new and improved framework named the Linear $\alpha$-cut intervals-based Parsimonious Fuzzy Best-Worst Method (Linear $\alpha$-PFBWM). By following the systematic steps described below, we can determine the priorities for all alternatives while keeping the DM's evaluation burden minimal.

\paragraph{\textbf{Step 1: Assigning Initial Fuzzy Ratings}}
Let $A = \{a_1, a_2, \dots, a_n\}$ denote the set of available alternatives to be evaluated. In this initial step, the DM assigns a Triangular Fuzzy Number (TFN), denoted by $\tilde{r}(a_j) = (l_j, m_j, u_j)$, to represent the fuzzy performance rating of each alternative $a_j \in A$. This assigned rating reflects the relative preference or overall performance of alternative $a_j$ against the remaining alternatives.

\paragraph{\textbf{Step 2: Selection of Well-Distributed Reference Alternatives}}
In this step, a predefined set of $t$ reference alternatives, denoted as $(a_{\gamma_1}, a_{\gamma_2}, \dots, a_{\gamma_t})$, is selected from the main set of alternatives $A$. To ensure that these selected reference alternatives are well-distributed across the entire range of alternative sizes, the analyst solves the Mixed-Integer Linear Programming (MILP) \cite{corrente2024better} optimization framework shown in Model (\ref{PFBWM_MILP}) for the DM. In this MILP formulation, $m_j$ represents the middle value of the initial Triangular Fuzzy Number (TFN) rating assigned to alternative $a_j$ in the previous step (Step 1). By running this optimization model, the analyst extracts the best representative reference alternatives and provides them to the DM:
\begin{equation}\label{PFBWM_MILP}
\begin{aligned}
& \max \varepsilon = \varepsilon^*, \quad \text{subject to} \\[1ex]
& \left.
\begin{aligned}
& \rho_j \cdot m_j - \rho_{j'} \cdot m_{j'} \geqslant \varepsilon - (2 - \rho_j - \rho_{j'}) \cdot M \\
& \quad \text{for all } j, j' = 1, \dots, n, \text{ such that } m_j \geqslant m_{j'} \\[1ex]
& \sum_{j=1}^n \rho_j = t, \\
& \rho_{(1)} = 1, \rho_{(n)} = 1, \\
& \rho_{(j)} + \rho_{(j+1)} + \rho_{(j+2)} \leqslant 2, \quad \text{for all } j = 1, \dots, n - 2, \\
& \rho_j \in \{0, 1\} \quad \text{for all } j = 1, \dots, n.
\end{aligned}
\right\}
\end{aligned}
\end{equation}

\paragraph{\textbf{Step 3: Determining Priorities of Reference Alternatives}}
The objective of this step is to determine the priorities of the selected reference alternatives $\{a_{\gamma_1}, \dots, a_{\gamma_t}\}$. Initially, the DM identifies the best alternative $a_B$ and the worst alternative $a_W$ within this subset. Subsequently, fuzzy pairwise comparisons are executed based on the linguistic scale provided in Table~\ref{tab:linguistic_tfns}, yielding the Best-to-Other (BO) and Other-to-Worst (OW) fuzzy comparison vectors.
 
Before computing the priorities, the consistency of the DM's judgments must be verified. Accordingly, the Global Input-based Consistency Ratio ($CR^I$) \cite{Inputbasedconsistency} is calculated using the formulations defined in Eq.~(\ref{CR}) and Eq.~(\ref{CR_j}). If $CR^I$ remains within the acceptable threshold provided in Table~\ref{tab:consistency_thresholds}, the evaluation process can reliably proceed. Conversely, if the consistency threshold is exceeded, the DM must adjust the preference information to resolve contradictions prior to recalculation. Finally, the proposed Linear $\alpha$-FBWM optimization model from Section~3.1 is applied to determine the priorities of the reference alternatives, denoted as $\tilde{u}(\tilde{r}(a_{\gamma_1})), \dots, \tilde{u}(\tilde{r}(a_{\gamma_t}))$.

\begin{equation}\label{CR}
CR^I = \max_j CR_j^I
\end{equation}
where
\begin{equation}\label{CR_j}
CR_j^I = 
\begin{cases} 
\left| \frac{R(\tilde{a}_{Bj} \otimes \tilde{a}_{jW} - \tilde{a}_{BW})}{R(\tilde{a}_{BW} \otimes \tilde{a}_{BW} - \tilde{a}_{BW})} \right| & \tilde{a}_{BW} \neq (1, 1, 1) \\[2ex]
0 & \tilde{a}_{BW} = (1, 1, 1)
\end{cases}
\end{equation}

\begin{table}[htbp]
\centering
\caption{Thresholds for different combinations using input-based consistency measurement \cite{Inputbasedconsistency}.}
\label{tab:consistency_thresholds}
\begin{tabular}{lccccccc}
\toprule
\multirow{2}{*}{Scales} & \multicolumn{7}{c}{Criteria} \\ \cmidrule(l){2-8} 
 & 3 & 4 & 5 & 6 & 7 & 8 & 9 \\ \midrule
3 & 0.1667 & 0.1667 & 0.1667 & 0.1667 & 0.1667 & 0.1667 & 0.1667 \\
4 & 0.1121 & 0.1529 & 0.1898 & 0.2206 & 0.2527 & 0.2577 & 0.2683 \\
5 & 0.1354 & 0.1994 & 0.2306 & 0.2546 & 0.2716 & 0.2844 & 0.2960 \\
6 & 0.1330 & 0.1990 & 0.2643 & 0.3044 & 0.3144 & 0.3221 & 0.3262 \\
7 & 0.1294 & 0.2457 & 0.2819 & 0.3029 & 0.3144 & 0.3251 & 0.3403 \\
8 & 0.1309 & 0.2521 & 0.2958 & 0.3154 & 0.3408 & 0.3620 & 0.3657 \\
9 & 0.1359 & 0.2681 & 0.3062 & 0.3337 & 0.3517 & 0.3620 & 0.3662 \\ \bottomrule
\end{tabular}
\end{table}

\paragraph{\textbf{Step 4: Determining Priorities of Non-Reference Alternatives}}
In the final step, the priorities of all non-reference alternatives are determined. For each alternative $a_j \in A$ whose initial fuzzy rating's middle value lies within the specific interval $m_j \in [m_{\gamma_s}, m_{\gamma_{s+1}}]$, its corresponding crisp priority $R(\tilde{w}_j) = R(\tilde{u}(\tilde{r}(a_j)))$ is directly computed. This is achieved by linearly interpolating the reference priorities $\tilde{u}(\tilde{r}(a_{\gamma_s}))$ and $\tilde{u}(\tilde{r}(a_{\gamma_{s+1}}))$ obtained in Step~3. Using the linearity of the GMIR operator $R$ (\ref{lineartyofR}), this interpolation is formulated as:

\begin{equation*}
R(\tilde{w}_j) = R(\tilde{u}(\tilde{r}(a_j))) = R\left[ \tilde{u}(\tilde{r}(a_{\gamma_s})) + \frac{\tilde{u}(\tilde{r}(a_{\gamma_{s+1}})) - \tilde{u}(\tilde{r}(a_{\gamma_s}))}{\tilde{r}(a_{\gamma_{s+1}}) - \tilde{r}(a_{\gamma_s})} \left( \tilde{r}(a_j) - \tilde{r}(a_{\gamma_s}) \right) \right]
\end{equation*}
\begin{equation}\label{fuzzy_interpolation}
= R(\tilde{u}(\tilde{r}(a_{\gamma_s}))) + R\left(\frac{\tilde{u}(\tilde{r}(a_{\gamma_{s+1}})) - \tilde{u}(\tilde{r}(a_{\gamma_s}))}{\tilde{r}(a_{\gamma_{s+1}}) - \tilde{r}(a_{\gamma_s})} \left( \tilde{r}(a_j) - \tilde{r}(a_{\gamma_s}) \right)\right)
\end{equation}

\section{Comparative Analysis and Numerical Validation}
\subsection{Comparative Analysis of the Proposed Linear \texorpdfstring{$\alpha$}{alpha}-FBWM}
In this section, the practical applicability of the proposed Linear $\alpha$-FBWM is demonstrated, highlighting its superiority over existing methods. To begin, the essential notations are introduced. Let $F_k$ represent the subset of $[0, 1]$ containing $k$ elements that partition $[0, 1]$ into $k-1$ uniform sub-intervals, defined as follows:
\begin{equation}
\label{eq:fk_definition}
F_k = \left\{0, \frac{1}{k-1}, \frac{2}{k-1}, \dots, 1\right\}
\end{equation}
For example, setting $k = 2$ yields $F_2 = \{0, 1\}$, while $k = 11$ results in $F_{11} = \{0, 0.1, 0.2, \dots, 0.9, 1\}$. When applying the proposed $\alpha$-FBWM, the weights are computed by setting $k = 2^i + 1$, where $i \in \mathbb{N} \cup \{0\}$. This specific choice of $k$ ensures that each successive partition bisects every sub-interval of the preceding partition into two equal intervals.

For a given FPCS $(\tilde{A}_b, \tilde{A}_w)$, let $\eta^*$ be the optimal objective value of model (\ref{linearformulationoptimization}), and let $\eta^*_{F_k}$ be the optimal objective value of model (\ref{F_linearformulation}) formulated using $F_k$. Here $\|F\|_\infty = \frac{1}{k-1}$. According to Theorem~\ref{theorem_EA} and Remark~\ref{Remark_upper_bound}, the bound is established as $|\eta^* - \eta^*_{F_k}| \le \frac{11}{k-1}$. This guarantees that the Error of Approximation (EA) of an approximate weight set obtained using $F_k$ is at most $\frac{11}{k-1}$. Therefore, the value of $k$ can be chosen to obtain a weight set that satisfies the desired EA.
 To validate and compare the performance of the proposed Linear $\alpha$-FBWM against existing methods, two numerical examples from the MCDM literature \cite{ratandhara2024alpha} are solved below:

\vspace{0.3cm}

\noindent\textbf{Example 1:} Let $C = \{c_1, c_2, \dots, c_5\}$ be the set of decision criteria, where $c_2$ and $c_5$ are the Best and the Worst criteria, respectively. The fuzzy Best-to-Others and Others-to-Worst vectors provided by the DM are $\tilde{A}_b = (\tilde{2}, \tilde{1}, \tilde{4}, \tilde{2}, \tilde{8})$ and $\tilde{A}_w = (\tilde{3}, \tilde{8}, \tilde{5}, \tilde{4}, \tilde{1})^T$. 

\vspace{0.3cm}

\noindent\textbf{Example 2:}  Let $C = \{c_1, c_2, \dots, c_5\}$ be the set of decision criteria, where $c_2$ and $c_5$ are the Best and the Worst criteria, respectively. The fuzzy Best-to-Others and Others-to-Worst vectors provided by the DM are $\tilde{A}_b = (\tilde{3}, \tilde{1}, \tilde{3}, \tilde{2}, \tilde{6})$ and $\tilde{A}_w = (\tilde{2}, \tilde{6}, \tilde{6}, \tilde{3}, \tilde{1})^T$.

\paragraph{\textbf{Computational Efficiency and Uncertainty Analysis}}
For any specified value of $k$, existing $\alpha$-FBWM approaches require solving 11 distinct non-linear optimization models followed by subsequent interval boundary averaging to determine a unique weight set. This multi-stage process is computationally intensive and time-consuming. Furthermore, since the resulting unique criteria weights are generated as crisp numbers, the inherent fuzzy uncertainty within the weights is entirely lost.

Conversely, when solving these examples using the proposed Linear $\alpha$-FBWM, only one linear programming model is solved to determine the unique weight set. Moreover, the resulting weights are preserved as Triangular Fuzzy Numbers (TFNs), which capture the underlying uncertainty inherent within the criteria weights.

\paragraph{\textbf{Analysis of Computed Weights}}
By applying the proposed Linear $\alpha$-FBWM, the criteria weights are computed for $i = 0, 7,$ and $9$, corresponding to $k = 2, 129,$ and $513$, respectively. The detailed fuzzy weights and their corresponding defuzzified values for Example 1 and Example 2 are compiled in Table~\ref{tab:example1_computed_weights} and Table~\ref{tab:example2_computed_weights}.

An analysis of these tables demonstrates the computational behavior of the EA bound. For both numerical examples, as $k$ increases from $2$ to $513$, the maximum theoretical EA value drops from $11$ to $0.0214844$. This monotonic decrease confirms that selecting a larger partition size $k$ significantly minimizes approximation errors, yielding a precise and stable fuzzy representation of the criteria weights.

\paragraph{\textbf{Comparative Analysis and Consistency Verification}}
The derived defuzzified weights are compared with those obtained from the existing $\alpha$-FBWM in Table~\ref{tab:comparison_example1} for Example~1 and Table~\ref{tab:comparison_example2} for Example~2. Existing literature \cite{ratandhara2024alpha} indicates that the existing $\alpha$-FBWM outperforms existing methods such as F-SBWM \cite{F-SBWM} and GP-FBWM \cite{GP-FBWM}. However, the computational results demonstrate that the proposed Linear $\alpha$-FBWM achieves superior consistency and alignment compared to the existing $\alpha$-FBWM.

In the proposed Linear $\alpha$-FBWM framework, the optimal error variable $\eta^*_{F_k}$ serves as a direct indicator of consistency, where values closer to zero signify higher consistency. In Example 1 (Table~\ref{tab:comparison_example1}), the proposed framework yields an optimal error of $\eta^*_{F_k} = 0.0854$, compared to $1.3945$ in the existing model. This optimization error remains significantly lower than the upper bound of the Consistency Ratio (CR) of the existing $\alpha$-FBWM, which stands at $0.3138$. Similarly, for Example 2 (Table~\ref{tab:comparison_example2}), the proposed model achieves an optimal error of $\eta^*_{F_k} = 0.1095$, in contrast to $1.5360$ in the existing model, remaining well below the upper bound of the CR of the existing $\alpha$-FBWM, which stands at $0.5146$.

To verify the ordinal reliability of the generated weights, the Ordinal Preference Violation (OPV) is evaluated as a percentage. As shown in Table~\ref{tab:comparison_example1}, the proposed method exhibits a lower OPV of $10\%$ compared to $13.33\%$ in the existing method for Example 1, indicating superior alignment with the DM's original ordinal preferences. For Example 2 (Table~\ref{tab:comparison_example2}), both methods exhibit an identical low OPV of $11.67\%$. These metrics confirm that the Linear $\alpha$-FBWM reduces computational overhead while enhancing the consistency and ordinal reliability of the final weights.

To further validate the performance of the proposed Linear $\alpha$-FBWM via the OPV metric, eight additional numerical datasets (Examples 3 to 10) are examined. The Best-to-Others ($\tilde{A}_b$) and Others-to-Worst ($\tilde{A}_w$) fuzzy preference vectors for these cases are detailed in Table~\ref{tab:fuzzy_vectors}. Both methods were applied to these datasets to compute their respective OPV percentages.

The comparative results are illustrated in Figure~\ref{fig:opv_comparison}, where the red and blue markers represent the OPV values generated by the existing $\alpha$-FBWM and the proposed Linear $\alpha$-FBWM, respectively. This visual comparison demonstrates that the Linear $\alpha$-FBWM consistently yields OPV values that are lower than or equal to those of the existing $\alpha$-FBWM. This confirms that the proposed Linear $\alpha$-FBWM effectively preserves the DM's ordinal preferences while significantly reducing the computational effort.

\begin{remark}
\label{rem:k_selection}
Unless specified otherwise, the partition parameter is configured at $i = 7$ (corresponding to $k = 129$) for solving the proposed Linear $\alpha$-FBWM model throughout this study.
\end{remark}

\begin{table*}[htbp]
    \centering
    \caption{Computed weights for Example 1 using the proposed Linear $\alpha$-FBWM.}
    \label{tab:example1_computed_weights}
    \resizebox{\textwidth}{!}{
    \begin{tabular}{l c c c c c c}
        \toprule
        \textbf{Criterion} & \multicolumn{2}{c}{$k = 2$} & \multicolumn{2}{c}{$k = 129$} & \multicolumn{2}{c}{$k = 513$} \\
        \cmidrule(lr){2-3} \cmidrule(lr){4-5} \cmidrule(lr){6-7}
        & \textbf{Approx. optimal weight} & \textbf{Defuzzified weight} & \textbf{Approx. optimal weight} & \textbf{Defuzzified weight} & \textbf{Approx. optimal weight} & \textbf{Defuzzified weight} \\
        \midrule
        $c_1$ & $(0.1651, 0.2097, 0.2296)$ & $0.2056$ & $(0.1666, 0.2088, 0.2316)$ & $0.2056$ & $(0.1666, 0.2088, 0.2316)$ & $0.2056$ \\
        $c_2$ & $(0.3142, 0.4108, 0.4108)$ & $0.3947$ & $(0.3171, 0.4144, 0.4144)$ & $0.3982$ & $(0.3171, 0.4144, 0.4144)$ & $0.3982$ \\
        $c_3$ & $(0.0991, 0.1239, 0.1329)$ & $0.1212$ & $(0.0988, 0.1234, 0.1340)$ & $0.1211$ & $(0.0987, 0.1234, 0.1340)$ & $0.1211$ \\
        $c_4$ & $(0.1651, 0.2477, 0.2658)$ & $0.2370$ & $(0.1445, 0.2479, 0.2682)$ & $0.2340$ & $(0.1446, 0.2478, 0.2682)$ & $0.2340$ \\
        $c_5$ & $(0.0363, 0.0417, 0.0459)$ & $0.0415$ & $(0.0366, 0.0411, 0.0460)$ & $0.0412$ & $(0.0366, 0.0411, 0.0460)$ & $0.0412$ \\
        \midrule
        \textbf{EA} $\leq$ & \multicolumn{2}{c}{$11$} & \multicolumn{2}{c}{$0.0859375$} & \multicolumn{2}{c}{$0.0214844$} \\
        \bottomrule
    \end{tabular}
    }
\end{table*}

\begin{table}[htbp]
    \centering
    \caption{Comparison of the computed defuzzified weights and consistency for Example 1.}
    \label{tab:comparison_example1}
    \begin{tabular}{lcc}
        \toprule
        \textbf{Criterion} & \textbf{Proposed Linear $\alpha$-FBWM ($k=129$)} & \textbf{Existing $\alpha$-FBWM ($k=129$)} \\
        \midrule
        $c_1$ & $0.2056$ & $0.1671$ \\
        $c_2$ & $0.3982$ & $0.4367$ \\
        $c_3$ & $0.1211$ & $0.1676$ \\
        $c_4$ & $0.2340$ & $0.1884$ \\
        $c_5$ & $0.0412$ & $0.0465$ \\
        \midrule
        $\eta^*_{F_k}$ & $0.0854$ & $1.3945$ \\
        CR  & $-$ & $\leq$ $0.3138$ \\
        OPV (\% ) & 10 & 13.33\\
        \bottomrule
    \end{tabular}
\end{table}

\begin{table*}[htbp]
    \centering
    \caption{Computed weights for Example 2 using the proposed Linear $\alpha$-FBWM.}
    \label{tab:example2_computed_weights}
    \resizebox{\textwidth}{!}{
    \begin{tabular}{l c c c c c c}
        \toprule
        \textbf{Criterion} & \multicolumn{2}{c}{$k = 2$} & \multicolumn{2}{c}{$k = 129$} & \multicolumn{2}{c}{$k = 513$} \\
        \cmidrule(lr){2-3} \cmidrule(lr){4-5} \cmidrule(lr){6-7}
        & \textbf{Approx. optimal weight} & \textbf{Defuzzified weight} & \textbf{Approx. optimal weight} & \textbf{Defuzzified weight} & \textbf{Approx. optimal weight} & \textbf{Defuzzified weight} \\
        \midrule
        $c_1$ & $(0.1325, 0.1617, 0.2279)$ & $0.1678$ & $(0.1314, 0.1626, 0.2144)$ & $0.1660$ & $(0.1315, 0.1626, 0.2145)$ & $0.1660$ \\
        $c_2$ & $(0.3480, 0.3772, 0.4221)$ & $0.3799$ & $(0.3453, 0.3828, 0.4283)$ & $0.3841$ & $(0.3453, 0.3828, 0.4283)$ & $0.3841$ \\
        $c_3$ & $(0.1325, 0.1617, 0.2279)$ & $0.1678$ & $(0.1277, 0.1639, 0.2097)$ & $0.1655$ & $(0.1277, 0.1639, 0.2097)$ & $0.1655$ \\
        $c_4$ & $(0.1766, 0.2425, 0.2874)$ & $0.2390$ & $(0.1615, 0.2445, 0.2917)$ & $0.2385$ & $(0.1616, 0.2445, 0.2917)$ & $0.2385$ \\
        $c_5$ & $(0.0449, 0.0449, 0.0481)$ & $0.0454$ & $(0.0456, 0.0456, 0.0472)$ & $0.0458$ & $(0.0456, 0.0456, 0.0472)$ & $0.0458$ \\
        \midrule
        \textbf{EA} $\leq$ & \multicolumn{2}{c}{$11$} & \multicolumn{2}{c}{$0.0859375$} & \multicolumn{2}{c}{$0.0214844$} \\
        \bottomrule
    \end{tabular}
    }
\end{table*}

\begin{table}[htbp]
    \centering
    \caption{Comparison of the computed defuzzified weights and consistency for Example 2.}
    \label{tab:comparison_example2}
    \begin{tabular}{lcc}
        \toprule
        \textbf{Criterion} & \textbf{Proposed Linear $\alpha$-FBWM ($k=129$)} & \textbf{Existing $\alpha$-FBWM ($k=129$)} \\
        \midrule
        $c_1$ & $0.1660$ & $0.1394$ \\
        $c_2$ & $0.3841$ & $0.3922$ \\
        $c_3$ & $0.1655$ & $0.252 3$ \\
        $c_4$ & $0.2385$ & $0.1777$ \\
        $c_5$ & $0.0458$ & $0.0540$ \\
        \midrule
        $\eta^*_{F_k}$ & $0.1095$ & $1.5360$ \\
        CR & $-$ &  $\leq$ $0.5146$ \\
        OPV (\% ) & 11.67 & 11.67 \\
        \bottomrule
    \end{tabular}
\end{table}

\begin{table}[htbp]
    \centering
    \caption{Best to Others and Others to Worst fuzzy vectors for different examples.}
    \label{tab:fuzzy_vectors}
    \renewcommand{\arraystretch}{1.3}
    \begin{tabular}{ccc}
        \toprule
        \textbf{Examples} & \textbf{Best to Others vector $\tilde{A}_b$} & \textbf{Others to Worst vector $\tilde{A}_w$} \\
        \midrule
        Ex 3 & $(\tilde{2}, \tilde{1}, \tilde{4}, \tilde{3}, \tilde{8})$ & $(\tilde{3}, \tilde{8}, \tilde{5}, \tilde{4}, \tilde{1})^T$ \\
        Ex 4 & $(\tilde{3}, \tilde{1}, \tilde{4}, \tilde{2}, \tilde{8})$ & $(\tilde{3}, \tilde{8}, \tilde{5}, \tilde{4}, \tilde{1})^T$ \\
        Ex 5 & $(\tilde{3}, \tilde{1}, \tilde{4}, \tilde{2}, \tilde{6})$ & $(\tilde{2}, \tilde{6}, \tilde{6}, \tilde{3}, \tilde{1})^T$ \\
        Ex 6 & $(\tilde{4}, \tilde{1}, \tilde{3}, \tilde{2}, \tilde{6})$ & $(\tilde{2}, \tilde{6}, \tilde{6}, \tilde{3}, \tilde{1})^T$ \\
        Ex 7 & $(\tilde{3}, \tilde{1}, \tilde{3}, \tilde{2}, \tilde{6})$ & $(\tilde{2}, \tilde{6}, \tilde{5}, \tilde{3}, \tilde{1})^T$ \\
        Ex 8 & $(\tilde{3}, \tilde{1}, \tilde{4}, \tilde{2}, \tilde{6})$ & $(\tilde{2}, \tilde{6}, \tilde{5}, \tilde{3}, \tilde{1})^T$ \\
        Ex 9 & $(\tilde{4}, \tilde{1}, \tilde{3}, \tilde{2}, \tilde{6})$ & $(\tilde{2}, \tilde{6}, \tilde{5}, \tilde{3}, \tilde{1})^T$ \\
        Ex 10 & $(\tilde{4}, \tilde{1}, \tilde{3}, \tilde{2}, \tilde{6})$ & $(\tilde{3}, \tilde{6}, \tilde{4}, \tilde{5}, \tilde{1})^T$ \\
        \bottomrule
    \end{tabular}
\end{table}

\begin{figure}[htbp]
    \centering
    \includegraphics[width=0.8\textwidth]{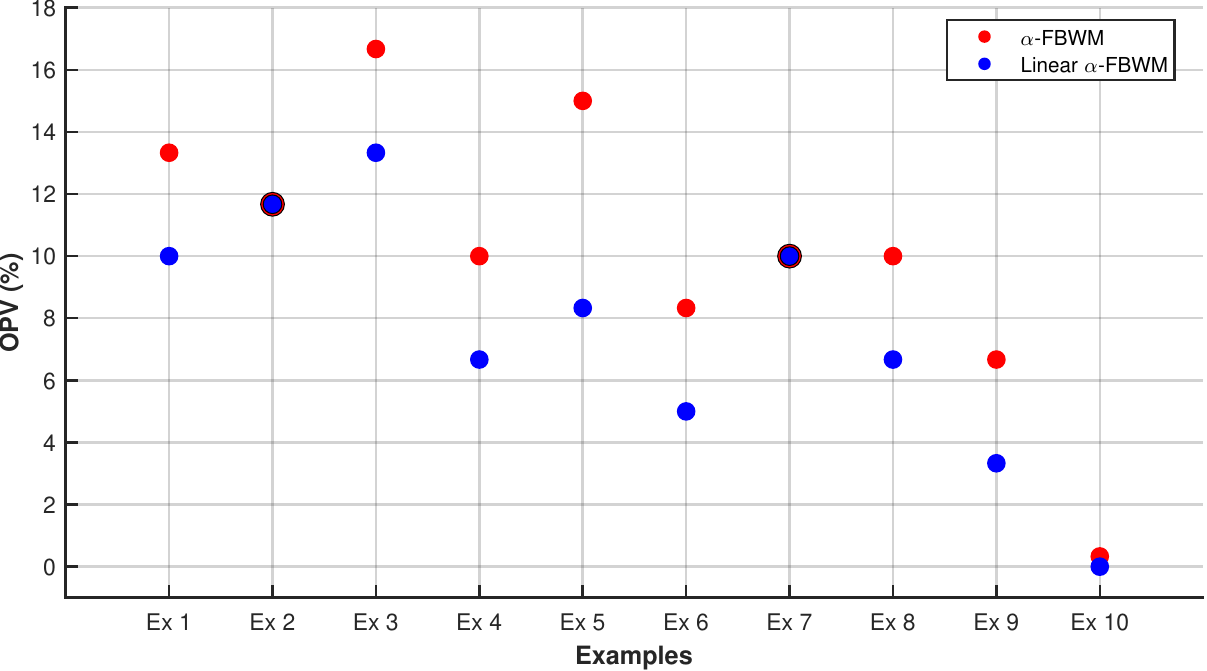}
    \caption{Comparison of OPV (\%) between \(\alpha\)-FBWM and Linear \(\alpha\)-FBWM across different examples.}
    \label{fig:opv_comparison}
\end{figure}

\subsection{Implementation of the Linear \texorpdfstring{$\alpha$}{alpha}-PFBWM: Country Size Estimation}
In this section, the practical implementation of the proposed Linear $\alpha$-PFBWM framework is demonstrated. To illustrate the evaluation process clearly, a decision case from the existing literature \cite{corrente2024better} is adopted to determine the surface areas of ten European countries. To validate the effectiveness of the Linear $\alpha$-PFBWM, the original crisp inputs are adapted by converting the initial country ratings provided by DM into Triangular Fuzzy Numbers (TFNs). Utilizing the systematic steps outlined below, the final priorities for all ten countries are computed.

\paragraph{\textbf{Step 1: Initial Fuzzy Ratings}}
Initially, a fuzzy rating is assigned to each of the ten countries under consideration: Bosnia, Estonia, Ireland, Iceland, Lithuania, the Czech Republic, Romania, Slovenia, Spain, and Switzerland. To transform the original crisp problem into a fuzzy environment, the reference ratings from the literature are utilized as the middle value ($m$) of the corresponding TFNs. Around this middle value, the lower and upper bounds are expanded to capture the subjective uncertainty inherent in estimating physical landmasses. Following this approach, Slovenia receives a reference fuzzy rating of $\tilde{r}_{\text{Slovenia}} = (1.0, 1.0, 1.0)$. The complete set of initial fuzzy ratings is compiled in Table~\ref{tab:step1_fuzzy_ratings}. 

\paragraph{\textbf{Step 2: Selection of Reference Countries}}
Subsequently, a subset of $t = 5$ reference countries, $\{ a_{\gamma_1}, a_{\gamma_2}, a_{\gamma_3}, a_{\gamma_4}\\, a_{\gamma_5} \}$, is selected from the complete set of ten countries. To guarantee that these reference points are well-distributed across the entire spectrum of country sizes, the MILP model presented in Model (\ref{PFBWM_MILP}) is solved. This formulation utilizes the middle values ($m_j$) derived in Step 1. Solving this optimization model identifies the five optimal representative reference countries: Spain $(a_{\gamma_5})$, Romania $(a_{\gamma_4})$, the Czech Republic $(a_{\gamma_3})$, Switzerland $(a_{\gamma_2})$, and Slovenia $(a_{\gamma_1})$.

\begin{table}[htbp]
    \centering
   \caption{Initial fuzzy ratings and final priority weights for all ten countries.}
    \label{tab:step1_fuzzy_ratings}
    \begin{tabular}{lccc}
        \toprule
        \textbf{Alternatives}& \textbf{Country} & \textbf{Fuzzy Rating} $\tilde{r}(\cdot)$ & $R(u(\tilde{r}(\cdot)))$\\
        \midrule
        $a_1$ & Bosnia & $(2.5, 3.0, 3.5)$ & $0.0912$ \\
        $a_2$ & Estonia & $(2.2, 2.7, 3.2)$ & $0.0918$ \\
        $a_3$ & Ireland & $(3.0, 3.5, 4.0)$ & $0.1023$\\
        $a_4$ & Iceland & $(4.2, 5.0, 5.8)$ & $0.3198$ \\
        $a_5$ & Lithuania & $(2.9, 3.7, 4.5)$ & $0.1291$\\
        $a_6$ & Czech Rep. & $(3.2, 4.0, 4.8)$ & $0.1268$ \\
        $a_7$ & Romania & $(5.2, 6.0, 6.8)$ & $0.2134$\\
        $a_8$ & Slovenia & $(1.0, 1.0, 1.0)$ & $0.0454$ \\
        $a_9$ & Spain & $(6.2, 7.0, 7.8)$ & $0.5231$ \\
        $a_{10}$ & Switzerland & $(2.2, 3.0, 3.8)$ & $0.0912$ \\
        \bottomrule
    \end{tabular}
\end{table}

\paragraph{\textbf{Step 3: Determining Priorities of Reference Countries}}
The objective of this step is to determine the priorities for the five selected reference countries. First, the best and worst reference options are identified by comparing the middle values ($m$) of their initial fuzzy ratings. Based on this comparison, Spain is designated as the best reference country and Slovenia as the worst reference country.

Next, the fuzzy pairwise comparison vectors are constructed. The Best-to-Other (BO) and Other-to-Worst (OW) preference vectors are obtained using the standard linguistic scale presented in Table~\ref{tab:linguistic_tfns}, yielding the fuzzy comparison data reported in Table~\ref{tab:fbo_fow_vectors}. Before deriving the priorities, the consistency of the pairwise judgments is verified by computing the $CR^{I}$ through Eq.~(\ref{CR}) and Eq.~(\ref{CR_j}). The computed consistency value stands at $0.2268$, which is well below the maximum acceptable threshold of $0.3062$ given in Table~\ref{tab:consistency_thresholds}. This confirms that the evaluations exhibit acceptable consistency. Finally, the proposed Linear $\alpha$-FBWM optimization model is solved to determine the optimal priorities of the reference countries, which are summarized in the lower section of Table~\ref{tab:fbo_fow_vectors}.

\begin{table*}[htbp]
    \centering
    \caption{Fuzzy pairwise comparisons and derived priorities for reference countries.}
    \label{tab:fbo_fow_vectors}
    \resizebox{\textwidth}{!}{
    \begin{tabular}{lccccc}
        \toprule
        & \textbf{Czech Rep.} ($a_{\gamma_3}$) & \textbf{Romania} ($a_{\gamma_4}$) & \textbf{Slovenia} ($a_{\gamma_1}$) & \textbf{Spain} ($a_{\gamma_5}$) & \textbf{Switzerland} ($a_{\gamma_2}$) \\
        \midrule
        $\tilde{a}_{BO}$ & $(4, 5, 6)$ & $(2, 3, 4)$ & $(9, 9, 9)$ & $(1, 1, 1)$ & $(6, 7, 8)$ \\[1ex]
        $\tilde{a}_{OW}$ & $(4,5,6)$ & $(6,7,8)$ & $(1, 1, 1)$ & $(9,9,9)$ & $(2,3,4)$ \\[1ex]
        $\tilde{r}(\cdot)$ & $\tilde{r}(a_{\gamma_3}) = (3.2, 4.0, 4.8)$ & $\tilde{r}(a_{\gamma_4}) = (5.2, 6.0, 6.8)$ & $\tilde{r}(a_{\gamma_1}) = (1.0, 1.0, 1.0)$ & $\tilde{r}(a_{\gamma_5}) = (6.2, 7.0, 7.8)$ & $\tilde{r}(a_{\gamma_2}) = (2.2, 3.0, 3.8)$ \\[1ex]
        $\tilde{u}(r(\cdot))$  & $(0.1062, 0.1240, 0.1586)$ & $(0.1586, 0.2040, 0.3058)$ & $(0.0454, 0.0454, 0.0454)$ & $(0.5231, 0.5231, 0.5231)$ & $(0.0790, 0.0908, 0.1051)$ \\
        \bottomrule
    \end{tabular}
    }
\end{table*}

\paragraph{\textbf{Step 4: Computation of Priorities for All Non-Reference Countries}}
In the final step, the priorities for non-reference countries are derived by employing the fuzzy interpolation formula defined in Eq.~(\ref{fuzzy_interpolation}) alongside the reference priorities determined in Step 3. To illustrate this procedure, consider the evaluation for Bosnia ($a_1$). Bosnia holds an initial fuzzy rating of $\tilde{r}(a_1) = (2.5, 3.0, 3.5)$. Its middle value is $m_1 = 3.0$, which strictly falls within the reference interval $[m_{\gamma_2}, m_{\gamma_3}] = [3.0, 4.0]$ defined by Switzerland and the Czech Republic. Consequently, the final priority $R(\tilde{u}(\tilde{r}(a_1)))$ for Bosnia is evaluated using the fuzzy interpolation formula as follows:

\vspace{0.2cm}

$\begin{aligned}
 R(\tilde{u}(\tilde{r}(a_1))) & = R(\tilde{u}(\tilde{r}(a_{\gamma_2}))) + R\left( \frac{\tilde{u}(\tilde{r}(a_{\gamma_3})) - \tilde{u}(\tilde{r}(a_{\gamma_2}))}{\tilde{r}(a_{\gamma_3}) - \tilde{r}(a_{\gamma_2})} \times (\tilde{r}(a_1) - \tilde{r}(a_{\gamma_2})) \right) \\[15pt]
&= R(0.0790, 0.0908, 0.1051) + R \left[ \frac{(0.1062, 0.1240, 0.1586) - (0.0790, 0.0908, 0.1051)}{(3.2, 4, 4.8) - (2.2, 3.0, 3.8)} \right. \\
& \quad \quad \quad \quad \quad \quad \quad \quad \quad \quad \quad \quad \quad \quad \times ((2.5, 3, 3.5) - (3.2, 4, 4.8)) \Biggr] \\[15pt]
&= 0.0912 + R(-0.17247, 0, 0.17247) \\[10pt]
&= 0.0912 + 0 \\[10pt]
&= 0.0912.
\end{aligned}$

\vspace{0.2cm}

Similarly, the final priorities for the remaining non-reference countries are determined by applying the procedure discussed above. In the final compilation presented in Table~\ref{tab:step1_fuzzy_ratings}, the priorities of the reference countries originally derived as TFNs, have been defuzzified into crisp values for a standardized presentation. Consequently, due to the structure of the fuzzy interpolation formulation, the priorities for all non-reference alternatives are directly obtained as crisp values. The complete set of derived final priorities for all ten countries is summarized in Table~\ref{tab:step1_fuzzy_ratings}.

\section{Industrial Application: Warehouse Location Selection}

To demonstrate the practical applicability, computational efficiency, and scalability of the proposed Linear $\alpha$-PFBWM, a real-world large-scale MCDM problem is investigated in this section.

\subsection{Problem Description and Expert Selection}

To optimize its supply chain and distribution network, a leading multinational paint manufacturing company in India aimed to establish new strategic warehouses across Gujarat. The expansion strategy required a combination of local depots near major urban hubs for high-volume markets and upcountry depots in interior regions to penetrate rural markets. Therefore, instead of selecting a single facility, the objective was to evaluate a large pool of potential sites to identify the top five locations for a multi-tiered distribution network. Evaluating these facilities represents a complex Multi-Criteria Decision-Making (MCDM) problem involving inherent uncertainties, conflicting logistical criteria, and a large number of alternatives. In corporate practice, while final strategic authorizations are made by top-level supply chain management (SCM) teams, the foundational evaluation depends heavily on localized operational data. A senior commercial executive accountable for regional operations and infrastructure assessment was consulted as the decision-maker (DM) to guarantee an accurate review. Although the executive does not grant final corporate authorization, their field-level insights into daily depot operations and vendor management provided crucial primary evaluation ratings. These inputs were then used in the proposed Linear $\alpha$-PFBWM framework. This approach connects ground-level operations with high-level decisions, helping senior management make a clear, data-driven choice.

\subsection{Selection of Alternatives and Criteria}
Following a preliminary geographic and industrial screening, 20 logistics hubs and industrial estates across Gujarat were selected as potential warehouse locations. The set of alternatives is defined as $A = \{A_1, A_2, \dots, A_{20}\}$, where the locations correspond to: $A_1$: Aslali, $A_2$: Changodar, $A_3$: Sanand, $A_4$: Bavla, $A_5$: Naroda, $A_6$: Vapi, $A_7$: Ankleshwar, $A_8$: Hazira, $A_9$: Palsana, $A_{10}$: Dahej, $A_{11}$: Halol, $A_{12}$: Savli, $A_{13}$: Makarpura, $A_{14}$: Shapar Veraval, $A_{15}$: Metoda, $A_{16}$: Morbi, $A_{17}$: Sikka, $A_{18}$: Mundra, $A_{19}$: Kandla, and $A_{20}$: Gandhidham.

The DM evaluated these 20 sites against five primary criteria:
\begin{itemize}
    \item \textbf{$C_1$ (Transportation Connectivity):} Proximity to national highways, railway freight corridors, and ports, directly influencing inbound and outbound freight efficiency.
    \item \textbf{$C_2$ (Real Estate and Setup Cost):} Expenses related to land acquisition, leasing, structural construction, and initial operational deployment.
    \item \textbf{$C_3$ (Proximity to Market and Dealers):} Distance to major consumer clusters and established distributor networks to minimize delivery lead times.
    \item \textbf{$C_4$ (Infrastructure Quality):} Availability and reliability of core utilities, including uninterrupted power, water supply, and internal road networks within the industrial estates.
    \item \textbf{$C_5$ (Labor Availability and Safety):} Access to skilled and semi-skilled labor pools, alongside safety and environmental compliance necessary for chemical and paint storage.
\end{itemize}

\subsection{Data Collection via Three-Point Estimation Method}
To capture the uncertainty in expert judgments, the DM evaluated the alternatives using the three-point estimation method \cite{3point}. This technique reduces the cognitive load on the expert by allowing evaluations to be expressed through three distinct values on a continuous scale from 1 to 100. Specifically, for each alternative under a given criterion, the DM provides:
\begin{enumerate}
    \item a pessimistic or minimum score $l$, representing the worst-case scenario,
    \item a most likely or moderate score $m$, representing the expected condition, and
    \item an optimistic or maximum score $u$, representing the best-case scenario.
\end{enumerate}
These three estimates are mapped directly to establish the lower, middle, and upper values of a Triangular Fuzzy Number (TFN), denoted as $\tilde{A} = (l, m, u)$, where $l \le m \le u$. This approach provides a continuous representation of judgment uncertainty while keeping the initial data collection phase simple.

\subsection{Weights of Criteria Using \texorpdfstring{$\alpha$}{alpha}-FBWM and Priorities of Alternatives Using \texorpdfstring{$\alpha$}{alpha}-PFBWM}

The application of the proposed framework, which integrates the Linear $\alpha$-FBWM for determining the weights of criteria and the Linear $\alpha$-PFBWM for deriving the priorities of alternatives, is structured into the following sequential phases:

\paragraph{Phase 1: Determination of the Weights of Criteria}
First, the DM identified the best (most important) and worst (least important) criteria from the set $\{C_1, C_2, C_3, C_4, C_5\}$. Based on this assessment, \textbf{Labor Availability and Safety ($C_5$)} was selected as the best criterion and \textbf{Real Estate and Setup Cost ($C_2$)} as the worst. 

The DM then provided fuzzy pairwise comparisons for the Best-to-Others (BO) and Others-to-Worst (OW) vectors using the linguistic scale defined in Table~\ref{tab:linguistic_tfns}. To validate these evaluations prior to weight calculation, the Input-based Consistency Ratio ($CR^I$) was computed. The resulting $CR^I$ value fell below the acceptable threshold, verifying the consistency of the DM's pairwise judgments. 

Finally, the proposed Linear $\alpha$-FBWM was applied to these vectors to determine the weights of the five criteria. The complete fuzzy pairwise comparison vectors (BO and OW), the resulting optimal fuzzy weights, and their defuzzified values are summarized in Table~\ref{tab:criteria_comparison}.

\begin{table}[htbp]
\centering
\caption{Fuzzy pairwise comparisons and the derived optimal weights for the evaluation criteria.}
\label{tab:criteria_comparison}
\renewcommand{\arraystretch}{1.2}
\resizebox{\textwidth}{!}{%
\begin{tabular}{lccccc}
\hline
\textbf{Criteria} & $C_1$ & $C_2$ & $C_3$ & $C_4$ & $C_5$ \\ \hline
\textbf{Best-to-Others (BO)}& (4,5,6) & (5,6,7) & (2,3,4) & (1,2,3) & (1,1,1) \\ \hline
\textbf{Others-to-Worst (OW)} & (1,2,3) & (1,1,1) & (3,4,5) & (4,5,6) & (5,6,7) \\ \hline

\textbf{Fuzzy Weights} & \textbf{$(0.0889,0.0988,0.1118)$} & \textbf{$(0.0564,0.0588,0.0622)$} & \textbf{$(0.1298,0.1642,0.2124)$} & \textbf{$(0.1778,0.2328,0.3981)$} & \textbf{$(0.3823,0.4240,0.4661)$} \\ \hline
\textbf{Defuzzified Weights $({W_i})$} & \textbf{$0.0993$} & \textbf{$0.0590$} & \textbf{$0.1665$} & \textbf{$0.2512$} & \textbf{$0.4240$} \\ \hline
\end{tabular}%
}
\end{table}

\paragraph{Phase 2: Fuzzy Rating of Alternatives}
In this phase, the DM evaluated the performance of the 20 warehouse locations against the five criteria. Using the three-point estimation method, the DM assigned a pessimistic ($l$), most likely ($m$), and optimistic ($u$) score on a scale from 1 to 100 for each alternative under every criterion. These estimates were modeled as Triangular Fuzzy Numbers (TFNs) to establish the initial performance ratings, which are detailed in Table~\ref{tab:fuzzy_ratings}.

\begin{table}[htbp]
\centering
\caption{Initial fuzzy ratings of 20 warehouse locations across five criteria (Scale: 1-100).}
\label{tab:fuzzy_ratings}
\resizebox{\textwidth}{!}{
\renewcommand{\arraystretch}{1.1}
\begin{tabular}{lccccc}
\hline
\textbf{Alternatives} & \textbf{$C_1$ (Transport)} & \textbf{$C_2$ (Cost)} & \textbf{$C_3$ (Market)} & \textbf{$C_4$ (Infra)} & \textbf{$C_5$ (Labor)} \\ \hline
$A_1$: Aslali (Ahmedabad) & (78, 83, 89) & (48, 55, 63) & (91, 96, 99) & (74, 80, 86) & (72, 78, 85) \\ 
$A_2$: Changodar (Ahmedabad)& (72, 79, 85) & (52, 60, 68) & (88, 93, 97) & (78, 84, 90) & (75, 81, 88) \\ 
$A_3$: Sanand (Ahmedabad)   & (82, 88, 93) & (35, 42, 50) & (80, 85, 90) & (92, 97, 100)& (68, 74, 81) \\ 
$A_4$: Bavla (Ahmedabad)    & (65, 71, 78) & (68, 75, 83) & (74, 80, 86) & (62, 68, 75) & (60, 67, 74) \\ 
$A_5$: Naroda (Ahmedabad)   & (75, 81, 87) & (45, 52, 60) & (93, 98, 100)& (70, 76, 82) & (70, 76, 83) \\ 
$A_6$: Vapi GIDC      & (85, 91, 96) & (28, 35, 43) & (72, 78, 84) & (90, 95, 99) & (82, 88, 94) \\ 
$A_7$: Ankleshwar GIDC& (80, 86, 91) & (38, 45, 52) & (70, 76, 82) & (85, 90, 95) & (78, 84, 91) \\ 
$A_8$: Hazira (Surat)& (90, 96, 100)& (20, 28, 36) & (65, 71, 77) & (88, 93, 98) & (64, 71, 78) \\ 
$A_9$: Palsana (Surat)& (74, 80, 86) & (58, 65, 73) & (82, 87, 92) & (76, 82, 88) & (80, 86, 92) \\ 
$A_{10}$: Dahej (Bharuch)& (88, 94, 99) & (32, 40, 48) & (60, 67, 74) & (91, 96, 100)& (66, 73, 80) \\ 
$A_{11}$: Halol GIDC  & (68, 74, 80) & (65, 72, 80) & (68, 74, 80) & (72, 78, 85) & (72, 78, 85) \\ 
$A_{12}$: Savli (Vadodara) & (70, 76, 83) & (60, 68, 76) & (75, 81, 87) & (78, 84, 90) & (75, 81, 88) \\ 
$A_{13}$: Makarpura(Vadodara)& (76, 82, 88) & (50, 58, 66) & (80, 86, 91) & (74, 80, 87) & (78, 84, 90) \\ 
$A_{14}$: Shapar-Veraval& (62, 68, 75) & (70, 78, 85) & (65, 71, 78) & (60, 66, 73) & (65, 72, 79) \\ 
$A_{15}$: Metoda GIDC & (65, 71, 78) & (68, 76, 84) & (68, 74, 80) & (65, 71, 78) & (68, 75, 82) \\ 
$A_{16}$: Morbi GIDC  & (72, 78, 85) & (55, 62, 70) & (58, 65, 72) & (78, 84, 91) & (80, 87, 93) \\ 
$A_{17}$: Sikka (Jamnagar)& (82, 88, 93) & (48, 56, 64) & (55, 61, 68) & (82, 88, 93) & (60, 66, 73) \\ 
$A_{18}$: Mundra (Kutch)& (93, 98, 100)& (25, 32, 40) & (50, 56, 62) & (91, 96, 100)& (62, 69, 76) \\ 
$A_{19}$: Kandla (Kutch)& (91, 97, 100)& (30, 38, 46) & (52, 58, 64) & (88, 93, 98) & (64, 71, 78) \\ 
$A_{20}$: Gandhidham  & (80, 86, 92) & (52, 60, 68) & (58, 64, 70) & (80, 86, 92) & (66, 73, 80) \\ \hline
\end{tabular}
}
\end{table}

\paragraph{Phase 3: Selection of Reference Alternatives}
In this phase, a subset of $t=5$ reference alternatives was determined for each criterion from the total pool of 20 alternatives. This selection follows the optimization procedure outlined in Step 2 of Section 3.4, which requires solving the MILP model (\ref{fuzzy_interpolation}). Applying this model yielded the following optimal sets of reference alternatives for each criterion:
\begin{itemize}
    \item $C_1$: $\{A_1, A_6, A_{12}, A_{14}, A_{18}\}$
    \item $C_2$: $\{A_5, A_8, A_9, A_{10}, A_{14}\}$
    \item $C_3$: $\{A_3, A_5, A_{11}, A_{16}, A_{18}\}$
    \item $C_4$: $\{A_3, A_5, A_7, A_{14}, A_{16}\}$
    \item $C_5$: $\{A_2, A_6, A_5, A_{17}, A_{19}\}$
\end{itemize}

\paragraph{Phase 4: Priorities of Reference Alternatives}
In this phase, the DM performed fuzzy pairwise comparisons among the selected reference alternatives for each criterion. The Best-to-Others (BO) and Others-to-Worst (OW) vectors were constructed using the linguistic scale defined in Table~\ref{tab:linguistic_tfns}. 

To ensure the reliability of the responses, the Input-based Consistency Ratio ($CR^I$) was calculated for each pairwise comparison vectors. In instances where the $CR^I$ exceeded the acceptable threshold, the DM's judgments were iteratively revised until satisfactory consistency was achieved. The final consistent fuzzy pairwise comparison vectors are presented in Table~\ref{tab:master_bwm_table}. Subsequently, the proposed Linear $\alpha$-FBWM was applied to these vectors to compute the priorities of the reference alternatives, which are also presented in Table~\ref{tab:master_bwm_table}.

\begin{table}[htbp]
    \centering
    \caption{Comprehensive Fuzzy Pairwise Comparisons and Priorities for Reference Alternatives}
    \label{tab:master_bwm_table}
    \resizebox{\textwidth}{!}{
    \begin{tabular}{clccccc}
        \toprule
        
        \multirow{4}{*}{$C_1$} 
        & Reference Alternatives & $A_{14}$ & $A_{12}$ & $A_{1}$ & $A_{6}$ & $A_{18}$ \\
        & $A_b$ & $(7,8,9)$ & $(6,7,8)$ & $(4,5,6)$ & $(2,3,4)$ & $(1,1,1)$ \\
        & $A_w$ & $(1,1,1)$ & $(1,2,3)$ & $(2,3,4)$ & $(5,6,7)$ & $(7,8,9)$ \\
       & Priorities & $(0.0521, 0.0521, 0.0529)$ & $(0.0845, 0.0893, 0.0960)$ & $(0.1121, 0.1248, 0.1426)$ & $(0.1605, 0.2083, 0.2669)$ & $(0.4745, 0.5214, 0.5736)$ \\
        \midrule
        
        \multirow{4}{*}{$C_2$} 
        & Reference Alternatives & $A_{8}$ & $A_{10}$ & $A_{5}$ & $A_{9}$ & $A_{14}$ \\
        & $A_b$ & $(7,8,9)$ & $(4,5,6)$ & $(2,3,4)$ & $(1,2,3)$ & $(1,1,1)$ \\
        & $A_w$ & $(1,1,1)$ & $(3,4,5)$ & $(4,5,6)$ & $(6,7,8)$ & $(7,8,9)$ \\
        & Priorities & $(0.0404, 0.0435, 0.0435)$ & $(0.0887, 0.1009, 0.1180)$ & $(0.1287, 0.1703, 0.2237)$ & $(0.1767, 0.2374, 0.4075)$ & $(0.3885, 0.4319, 0.4480)$ \\
        \midrule
        
        \multirow{4}{*}{$C_3$} 
        & Reference Alternatives & $A_{18}$ & $A_{16}$ & $A_{11}$ & $A_{3}$ & $A_{5}$\\
        & $A_b$ & $(7,8,9)$ & $(4,5,6)$ & $(2,3,4)$ & $(1,2,3)$ & $(1,1,1)$ \\
        & $A_w$ & $(1,1,1)$ & $(2,3,4)$ & $(4,5,6)$ & $(6,7,8)$ & $(7,8,9)$ \\
        & Priorities & $(0.0437, 0.0437, 0.0437)$ & $(0.0912, 0.1013, 0.1144)$ & $(0.1346, 0.1676, 0.2193)$ & $(0.1837, 0.2328, 0.4148)$ & $(0.3850, 0.4288, 0.4725)$ \\
        \midrule
        
        \multirow{4}{*}{$C_4$} 
        & Reference Alternatives & $A_{14}$ & $A_{5}$ & $A_{16}$ & $A_{7}$ & $A_{3}$\\
        & $A_b$ & $(6,7,8)$ & $(3,4,5)$ & $(2,3,4)$ & $(1,2,3)$ & $(1,1,1)$ \\
        & $A_w$ & $(1,1,1)$ & $(2,3,4)$ & $(3,4,5)$ & $(4,5,6)$ & $(6,7,8)$ \\
        & Priorities & $(0.0500, 0.0539, 0.0553)$ & $(0.1015, 0.1171, 0.1460)$ & $(0.1167, 0.1617, 0.2018)$ & $(0.1673, 0.2281, 0.3538)$ & $(0.3858, 0.4312, 0.4538)$ \\
        \midrule
        
        \multirow{4}{*}{$C_5$} 
        & Reference Alternatives & $A_{17}$ & $A_{19}$ & $A_{5}$ & $A_{2}$ & $A_{6}$ \\
        & $A_b$ & $(6,7,8)$ & $(4,5,6)$ & $(2,3,4)$ & $(1,2,3)$ & $(1,1,1)$ \\
        & $A_w$ & $(1,1,1)$ & $(2,3,4)$ & $(3,4,5)$ & $(4,5,6)$ & $(6,7,8)$ \\
       & Priorities & $(0.0465, 0.0531, 0.0576)$ & $(0.0842, 0.0982, 0.1195)$ & $(0.1216, 0.1665, 0.2253)$ & $(0.1638, 0.2399, 0.3456)$ & $(0.4121, 0.4386, 0.4386)$ \\
        \bottomrule
    \end{tabular}
    }
\end{table}

\paragraph{Phase 5: Priority Computation for Non-Reference Alternatives and Final Ranking}
In this final phase, the priorities of non-reference alternatives were determined for each criterion based on the reference alternatives' priorities, following the fuzzy interpolation procedure detailed in Step 4 of Section 3.4. The resulting priorities are presented in Table~\ref{tab:master_ranking_matrix}. To establish the final overall performance, a total utility score was calculated for each alternative by multiplying the criteria weights by their corresponding priorities and aggregating the results. 

Based on the final scores, the overall ranking of the 20 warehouse locations was determined, identifying Vapi ($A_6$), Palsana ($A_9$), Morbi ($A_{16}$), Ankleshwar ($A_{7}$), and Makarpura ($A_{13}$) as the top five strategic alternatives. To highlight the efficiency of this selection, a comparative evaluation of the decision-making workload is conducted. In a standard Fuzzy Best-Worst Method (FBWM), evaluating $m=20$ alternatives across $n=5$ criteria requires $2m - 3 = 37$ fuzzy pairwise comparisons per criterion, totaling $185$ comparisons. In contrast, the proposed parsimonious framework ($\alpha$-PFBWM) reduces this cognitive burden by utilizing a subset of $t=5$ reference alternatives. This reduces the fuzzy comparisons per criterion to $2t - 3 = 7$, requiring only 35 comparisons alongside the initial 20 fuzzy ratings of alternatives. While the criteria-level comparisons remain identical in both approaches, this parsimonious expansion achieves an $81.08\%$ reduction in the fuzzy pairwise comparison workload for alternatives.

From a corporate perspective, this substantial workload reduction offers distinct operational advantages. For the regional executive (DM), executing 185 fuzzy pairwise comparisons could introduce response bias due to cognitive fatigue. By requiring only 20 initial fuzzy ratings and 35 fuzzy pairwise comparisons, the Linear $\alpha$-PFBWM minimizes the evaluation burden while capturing high-quality primary data. Consequently, this consistent dataset provides senior management with reliable, data-driven inputs rather than inconsistent or biased opinions from different branches. By connecting empirical, ground-level data with long-term corporate strategy, the framework enhances decision-making confidence. This provides a clear operational justification for authorizing the multi-tiered warehouse network, balancing local depots for high-volume urban demand with upcountry depots to penetrate deeper rural markets.

\begin{table}[htbp]
\centering
\caption{Final decision matrix: Interpolated priorities of non-reference alternatives, final crisp scores, and final ranking of warehouse locations.}
\label{tab:master_ranking_matrix}
\renewcommand{\arraystretch}{1.3}
\resizebox{\textwidth}{!}{
\begin{tabular}{lcccccccc}
\hline
\textbf{Alt. $\setminus$ Criteria} & \textbf{$C_1$} & \textbf{$C_2$} & \textbf{$C_3$} & \textbf{$C_4$} & \textbf{$C_5$} & \textbf{Final Crisp Score} & \textbf{Rank} \\ 
\textbf{Weights $\rightarrow$} &  \textbf{$0.0993$} & \textbf{$0.0590$} & \textbf{$0.1665$} & \textbf{$0.2512$} & \textbf{$0.4240$}  & $U_i = \sum (w_j \cdot v_{ij})$ & \\ \hline
$A_1$: Aslali       & 0.1256 & 0.1915 & 0.4020 & 0.1401 & 0.1992 & 0.2104 & 9 \\
$A_2$: Changodar    & 0.1050 & 0.2236 & 0.3619 & 0.1609 & 0.2448 & 0.2281 & 7 \\
$A_3$: Sanand       & 0.1784 & 0.1135 & 0.2549 & 0.4274 & 0.1384 & 0.2329 & 6 \\
$A_4$: Bavla        & 0.0662 & 0.3877 & 0.2166 & 0.0666 & 0.0621 & 0.1086 & 20 \\
$A_5$: Naroda       & 0.1153 & 0.1723 & 0.4288 & 0.1193 & 0.1688 & 0.1946 & 12 \\
$A_6$: Vapi         & 0.2101 & 0.0772 & 0.2013 & 0.3739 & 0.4340 & 0.3369 & 1 \\
$A_7$: Ankleshwar   & 0.1573 & 0.1311 & 0.1860 & 0.2389 & 0.3259 & 0.2525 & 4 \\
$A_8$: Hazira       & 0.4331 & 0.0430 & 0.1477 & 0.3189 & 0.0994 & 0.1924 & 13 \\
$A_9$: Palsana      & 0.1102 & 0.2556 & 0.2817 & 0.1505 & 0.3800 & 0.2719 & 2 \\
$A_{10}$: Dahej     & 0.3439 & 0.1017 & 0.1171 & 0.3988 & 0.1235 & 0.2122 & 8 \\
$A_{11}$: Halol     & 0.0802 & 0.3481 & 0.1707 & 0.1297 & 0.1992 & 0.1740 & 15 \\
$A_{12}$: Savli     & 0.0896 & 0.2952 & 0.2243 & 0.1609 & 0.2448 & 0.2079 & 10 \\
$A_{13}$: Makarpura & 0.1205 & 0.2107 & 0.2683 & 0.1401 & 0.3259 & 0.2424 & 5 \\
$A_{14}$: Shapar    & 0.0522 & 0.4273 & 0.1477 & 0.0535 & 0.1087 & 0.1145 & 19 \\
$A_{15}$: Metoda    & 0.0662 & 0.4009 & 0.1707 & 0.0864 & 0.1536 & 0.1455 & 16 \\
$A_{16}$: Morbi     & 0.0999 & 0.2364 & 0.1018 & 0.1609 & 0.4070 & 0.2538 & 3 \\
$A_{17}$: Sikka     & 0.1784 & 0.1979 & 0.0760 & 0.2129 & 0.0528 & 0.1179 & 18 \\
$A_{18}$: Mundra    & 0.5223 & 0.0626 & 0.0437 & 0.3988 & 0.0808 & 0.1973 & 11 \\
$A_{19}$: Kandla    & 0.4777 & 0.0919 & 0.0566 & 0.3189 & 0.0994 & 0.1845 & 14 \\
$A_{20}$: Gandhidham& 0.1573 & 0.2236 & 0.0953 & 0.1869 & 0.1235 & 0.1440 & 17 \\ \hline
\end{tabular}
}
\end{table}

\section{Conclusion and Future Research Directions}

In Multi-Criteria Decision-Making (MCDM) under uncertain environments, the Fuzzy Best-Worst Method (FBWM) is a widely adopted approach for determining criteria weights. This study proposed the Linear $\alpha$-FBWM to overcome the computational and structural limitations of the existing $\alpha$-FBWM. This linear model was subsequently extended into a parsimonious framework, termed the Linear $\alpha$-PFBWM, designed to handle large-scale applications with numerous alternatives. The proposed approach effectively resolves the methodological shortcomings identified in existing parsimonious FBWM extensions. The primary contributions and findings of this research are summarized below:

\begin{itemize}
    \item \textbf{Linear $\alpha$-FBWM Model Formulation:} We have proposed a novel Linear $\alpha$-FBWM model, which is developed by converting the existing $\alpha$-FBWM model into a linear form. Since finding the optimal weight set from the Linear $\alpha$-FBWM was difficult due to the inclusion of infinitely many constraints in the optimization model, we approximated the optimal weights using finite partitions of the interval $[0,1]$. Furthermore, the Error of Approximation (EA) for these approximate weights has been estimated.
    \item \textbf{Preserving Uncertainty and Ensuring Uniqueness:} We have shown that for a given finite partition $F$ of the interval $[0,1]$, the proposed Linear $\alpha$-FBWM model always gives a unique optimal solution. In this unique optimal solution, the weights of the criteria are obtained directly as Triangular Fuzzy Numbers (TFNs). By doing this, we overcome several limitations of the existing $\alpha$-FBWM model, where one had to solve $2n+1$ non-linear optimization models, followed by subsequent interval boundary averaging to determine a unique weight set for $n$ criteria, which was computationally intensive and time-consuming. Moreover, the unique weights obtained from the existing model were crisp values, meaning the valuable fuzzy uncertainty hidden within the criteria weights was completely lost, whereas Linear $\alpha$-FBWM resolves these limitations.
    
    \item \textbf{A Novel Cognitive Metric (OPV):} We introduced a new metric called Ordinal Preference Violation (OPV), which is based on a cognitive psychology-driven concept known as the prominence effect. This metric assesses the alignment between the final weight rankings and DM's intal fuzzy pairwise preferences. It does this by conducting a realistic asymmetric evaluation to measure the discrepancy whenever the weights calculated by the model contradict the initial rankings set by the DM through the Best-to-Others (BO) and Others-to-Worst (OW) fuzzy pairwise comparison vectors. We have shown through different examples that the defuzzified weights obtained from the Linear $\alpha$-FBWM match the DM's initial preference rankings much better compared to the unique weights obtained from the existing $\alpha$-FBWM. 

    \item \textbf{A New Parsimonious Fuzzy Best-Worst Method (Linear $\alpha$-PFBWM):} We proposed the Linear $\alpha$-PFBWM specifically to handle complex decision-making problems in an uncertain environment, especially when the number of alternatives becomes very large. Designed to overcome the limitations of existing parsimonious FBWM frameworks, this method works by directly integrating the proposed Linear $\alpha$-FBWM model into its structure. This framework allows DMs to input alternative ratings directly as fuzzy numbers (TFNs) and calculates the priorities of non-reference alternatives using fuzzy interpolation without any early defuzzification of the priorities of reference alternatives. 
    
The scaling capabilities and practical usefulness of the proposed Linear $\alpha$-PFBWM were tested using a literature example and a real-world industrial case study. In practice, the framework required only 20 initial fuzzy ratings and 35 pairwise comparisons instead of the traditional 185. This resulted in an $81.08\%$ reduction in the main evaluation workload. This big decrease in the required comparisons shows that the model can gather high-quality DM insights easily without causing mental fatigue in large-scale decision problems.

\end{itemize}

\subsection{Limitations}
Despite its computational efficiency, the proposed Linear $\alpha$-PFBWM framework exhibits a structural limitation regarding the output format of the computed priorities. While the proposed Linear $\alpha$-FBWM model calculates the priorities of reference alternatives as Triangular Fuzzy Numbers (TFNs), the fuzzy interpolation formula yields crisp values for the non-reference alternatives. Consequently, the framework does not fully preserve fuzzy uncertainty across all final priorities, restricting their direct integration into subsequent fuzzy decision-making stages.

\subsection{Future Research Directions}
While proposed Linear $\alpha$-FBWM and Linear $\alpha$-PFBWM models achieve linear optimization and lower cognitive load, they open up several directions for future work:
\begin{itemize}
    \item A useful direction for future work would be to explore alternative solution methods or algorithms for solving model (\ref{linearformulationoptimization}) to see if there are other efficient ways to determine the optimal weights.
    \item The current Linear $\alpha$-FBWM framework is entirely designed around Triangular Fuzzy Numbers (TFNs). Future studies can extend this linear $\alpha$-cut approach to more complex uncertainty environments, such as Intuitionistic, Spherical, or Neutrosophic fuzzy sets.
    \item In the proposed Linear $\alpha$-PFBWM, the initial fuzzy ratings are restricted to TFNs. Future researchers can investigate how using other types of fuzzy numbers for initial ratings affects this parsimonious concept and how the fuzzy interpolation formula can be adapted accordingly.
    \item The proposed Linear $\alpha$-PFBWM currently computes alternatives priorities based on the judgments of a single DM. Developing an effective framework to aggregate judgments and work with multiple DMs in a group environment would be a highly valuable addition to this research.
\end{itemize}

\section*{Acknowledgements}
The first author gratefully acknowledges the Council of Scientific and Industrial Research (CSIR), New Delhi, India, for providing financial support through the Junior Research Fellowship (JRF-NET) (File No. 09/1274(21444)/2025-EMR-I) to carry out this research work.

\bibliographystyle{elsarticle-num}
\bibliography{ref}

\begin{thebibliography}{10}
\expandafter\ifx\csname url\endcsname\relax
  \def\url#1{\texttt{#1}}\fi
\expandafter\ifx\csname urlprefix\endcsname\relax\def\urlprefix{URL }\fi
\expandafter\ifx\csname href\endcsname\relax
  \def\href#1#2{#2} \def\path#1{#1}\fi

\bibitem{saaty1990make}
T.~L. Saaty, How to make a decision: the analytic hierarchy process, European Journal of Operational Research 48~(1) (1990) 9--26.

\bibitem{ANP}
T.~L. Saaty, Decision making with dependence and feedback: The analytic network process, RWS Publications, Pittsburgh, PA, 2001.

\bibitem{TOPSIS}
C.-L. Hwang, K.~Yoon, Methods for multiple attribute decision making, in: Multiple Attribute Decision Making: Methods and Applications, Springer, Berlin, Heidelberg, 1981, pp. 58--191.

\bibitem{VIKOR}
S.~Opricovic, G.-H. Tzeng, Compromise solution by {MCDM} methods: A comparative analysis of {VIKOR} and {TOPSIS}, European Journal of Operational Research 156~(2) (2004) 445--455.

\bibitem{ELECTRE}
B.~Roy, The outranking approach and the foundations of {ELECTRE} methods, in: Readings in Multiple Criteria Decision Aid, Springer, Berlin, Heidelberg, 1990, pp. 155--183.

\bibitem{PROMETHEE}
J.-P. Brans, P.~Vincke, A preference ranking organisation method: (the {PROMETHEE} method for multiple criteria decision-making), Management Science 31~(6) (1985) 647--656.

\bibitem{rezaei2015best}
J.~Rezaei, Best-worst multi-criteria decision-making method, Omega 53 (2015) 49--57.

\bibitem{corrente2024better}
S.~Corrente, S.~Greco, J.~Rezaei, Better decisions with less cognitive load: The {P}arsimonious {BWM}, Omega 126 (2024) 103075.

\bibitem{moslem2023novel}
S.~Moslem, A novel parsimonious best worst method for evaluating travel mode choice, IEEE Access 11 (2023) 16768--16773.

\bibitem{guo2017fuzzy}
S.~Guo, H.~Zhao, Fuzzy best-worst multi-criteria decision-making method and its applications, Knowledge-Based Systems 121 (2017) 23--31.

\bibitem{ratandhara2024alpha}
H.~M. Ratandhara, M.~Kumar, An $\alpha$-cut intervals based {Fuzzy Best-Worst Method} for {Multi-Criteria Decision-Making}, Applied Soft Computing 159 (2024) 111625.

\bibitem{fuzzyparsimoniousZBWM}
S.~Moslem, Evaluating commuters' travel mode choice using the {Z}-number extension of {P}arsimonious {B}est {W}orst {M}ethod, Applied Soft Computing 173 (2025) 112918.

\bibitem{fuzzyPBWMSDGs}
F.~Ecer, {\.I}.~Yaran~{\"O}gel, V.~G{\"o}{\c{c}}o{\u{g}}lu, Interlinking {SDGs} and cittaslow criteria for sustainable local decision prioritization: A parsimonious fuzzy best-worst method, Sustainable Development (2026).

\bibitem{definationsoffuzzyzimmermann2011fuzzy}
H.-J. Zimmermann, Fuzzy Set Theory—And Its Applications, Springer Science \& Business Media, 2011.

\bibitem{definitionoffuzzynumberklir1996fuzzy}
G.~J. Klir, B.~Yuan, Fuzzy sets and fuzzy logic: theory and applications, Prentice Hall, 1995.

\bibitem{fuzzynumberarithchen1985operations}
S.-H. Chen, Operations on fuzzy numbers with function principal, Tamkang Journal of Management Sciences 6~(1) (1985) 13--25.

\bibitem{rudin1976principles}
W.~Rudin, Principles of Mathematical Analysis, 3rd Edition, McGraw-Hill, 1976.

\bibitem{linearprogrammingmodel}
G.~B. Dantzig, M.~N. Thapa, Linear programming: 2: theory and extensions, Springer, 2003.

\bibitem{ordinalviolation}
B.~Golany, M.~Kress, A multicriteria evaluation of methods for obtaining weights from ratio-scale matrices, European Journal of Operational Research 69~(2) (1993) 210--220.

\bibitem{Prominenceeffect}
A.~Tversky, S.~Sattath, P.~Slovic, Contingent weighting in judgment and choice, Psychological Review 95~(3) (1988) 371.

\bibitem{Inputbasedconsistency}
S.~Guo, Z.~Qi, A fuzzy best-worst multi-criteria group decision-making method, IEEE Access 9 (2021) 118941--118952.

\bibitem{F-SBWM}
M.~Amiri, M.~Hashemi-Tabatabaei, M.~Keshavarz-Ghorabaee, A.~Kaklauskas, E.~K. Zavadskas, J.~Antucheviciene, A fuzzy extension of simplified best-worst method ({F-SBWM}) and its applications to decision-making problems, Symmetry 15~(1) (2022) 81.

\bibitem{GP-FBWM}
O.~Rostami, M.~Tavakoli, A.~Tajally, M.~GhanavatiNejad, A goal programming-based fuzzy best--worst method for the viable supplier selection problem: a case study, Soft Computing 27~(6) (2023) 2827--2852.

\bibitem{3point}
I.~O. Pappas, A.~G. Woodside, Fuzzy-set qualitative comparative analysis ({fsQCA}): guidelines for research practice in information systems and marketing, International Journal of Information Management 58 (2021) 102310.

\end{thebibliography}

\newpage
\appendix
\section{Data Collection Questionnaire}
\label{app:questionnaire}

\textbf{Objective:} This survey aims to evaluate and rank 20 potential warehouse locations (comprising both Local and Upcountry Depots) in Gujarat based on 5 logistical criteria. The evaluation is conducted by a Senior Commercial executive and is divided into three distinct parts to accommodate the proposed Linear $\alpha$-PFBWM framework.

\vspace{0.3cm}
\textbf{Part I: Evaluation of Criteria}

\vspace{0.2cm}

\textit{Instruction:} Please identify the most important (Best) and least important (Worst) criterion among the five given criteria. 
\subsection{Selection of Alternatives and Criteria}
Following a preliminary geographic and industrial screening, 20 logistics hubs and industrial estates across Gujarat were selected as potential warehouse locations. The set of alternatives is defined as $A = \{A_1, A_2, \dots, A_{20}\}$, where the locations correspond to: $A_1$: Aslali, $A_2$: Changodar, $A_3$: Sanand, $A_4$: Bavla, $A_5$: Naroda, $A_6$: Vapi, $A_7$: Ankleshwar, $A_8$: Hazira, $A_9$: Palsana, $A_{10}$: Dahej, $A_{11}$: Halol, $A_{12}$: Savli, $A_{13}$: Makarpura, $A_{14}$: Shapar Veraval, $A_{15}$: Metoda, $A_{16}$: Morbi, $A_{17}$: Sikka, $A_{18}$: Mundra, $A_{19}$: Kandla, and $A_{20}$: Gandhidham.

The DM evaluated these 20 sites against five primary criteria ($n = 5$):
\begin{itemize}
    \item \textbf{$C_1$ (Transportation Connectivity):} Proximity to national highways, railway freight corridors, and ports, directly influencing inbound and outbound freight efficiency.
    \item \textbf{$C_2$ (Real Estate and Setup Cost):} Expenses related to land acquisition, leasing, structural construction, and initial operational deployment.
    \item \textbf{$C_3$ (Proximity to Market and Dealers):} Distance to major consumer clusters and established distributor networks to minimize delivery lead times.
    \item \textbf{$C_4$ (Infrastructure Quality):} Availability and reliability of core utilities, including uninterrupted power, water supply, and internal road networks within the industrial estates.
    \item \textbf{$C_5$ (Labor Availability and Safety):} Access to skilled and semi-skilled labor pools, alongside safety and environmental compliance necessary for chemical and paint storage.
\end{itemize}

\textbf{Best Criterion ($c_B$):} \rule{3cm}{0.15mm} \hspace{1cm} \textbf{Worst Criterion ($c_W$):} \rule{3cm}{0.15mm}
\newline

\vspace{0.3cm}

\textit{Instruction:} Utilizing the 1--9 linguistic scale presented in Table~\ref{tab:linguistic_tfns} (ranging from Equally Preferred to Absolutely Preferred), please compare the Best criterion against all other criteria, and compare all remaining criteria against the Worst criterion.

\begin{table}[htbp]
\centering
\renewcommand{\arraystretch}{1.2}
\begin{tabular}{|l|c|c|c|c|c|}
\hline
\textbf{Criteria} & $C_1$ & $C_2$ & $C_3$ & $C_4$ & $C_5$ \\ \hline
\textbf{Best-to-Others (BO)} & & & & & \\ \hline
\textbf{Others-to-Worst (OW)} & & & & & \\ \hline
\end{tabular}
\end{table}

\vspace{0.3cm}
\textbf{Part II: Initial Performance Rating of Alternatives (Three-Point Estimation)}

\vspace{0.2cm}

\textit{Instruction:} Please rate the preliminary performance of all 20 warehouse locations against each of the 5 criteria on a continuous scale of 1 to 100. Instead of a single crisp value, provide three estimates to capture uncertainty: Minimum score (Min), Most Likely score (Likely), and Maximum score (Max).

\begin{table}[htbp]
\centering
\renewcommand{\arraystretch}{1.2}
\resizebox{\textwidth}{!}{
\begin{tabular}{|l|c|c|c|c|c|}
\hline
\textbf{Alternatives} & \textbf{$C_1$ (Min, Likely, Max)} & \textbf{$C_2$ (Min, Likely, Max)} & \textbf{$C_3$ (Min, Likely, Max)} & \textbf{$C_4$ (Min, Likely, Max)} & \textbf{$C_5$ (Min, Likely, Max)} \\ \hline
$A_1$: Aslali & & & & & \\ \hline
$A_2$: Changodar & & & & & \\ \hline
$A_3$: Sanand & & & & & \\ \hline
$A_4$: Bavla & & & & & \\ \hline
$A_5$: Naroda & & & & & \\ \hline
$A_6$: Vapi & & & & & \\ \hline
$A_7$: Ankleshwar & & & & & \\ \hline
$A_8$: Hazira & & & & & \\ \hline
$A_9$: Palsana & & & & & \\ \hline
$A_{10}$: Dahej & & & & & \\ \hline
$A_{11}$: Halol & & & & & \\ \hline
$A_{12}$: Savli & & & & & \\ \hline
$A_{13}$: Makarpura & & & & & \\ \hline
$A_{14}$: Shapar-Veraval & & & & & \\ \hline
$A_{15}$: Metoda & & & & & \\ \hline
$A_{16}$: Morbi & & & & & \\ \hline
$A_{17}$: Sikka & & & & & \\ \hline
$A_{18}$: Mundra & & & & & \\ \hline
$A_{19}$: Kandla & & & & & \\ \hline
$A_{20}$: Gandhidham & & & & & \\ \hline
\end{tabular}
}
\end{table}

\vspace{0.3cm}

\textbf{Part III: Parsimonious Evaluation of Reference Alternatives}

\vspace{0.2cm}

\textit{Instruction:} Based on the initial ratings provided in Part II, a distinct set of 5 reference alternatives (spanning from lowest to highest initial performance) has been extracted for \textbf{each} criterion individually. 

For each specific criterion, please identify the Best ($A_b$) and Worst ($A_w$) alternative among its extracted reference set. Then, provide the Best-to-Others and Others-to-Worst pairwise comparisons using the 1 to 9 linguistic scale in Table~\ref{tab:linguistic_tfns}. 

\begin{table}[htbp]
\textit{Questionnaire Format for Criterion $C_1$ (Reference Set: $A_{14}, A_{12}, A_1, A_6, A_{18}$):}
\vspace{0.2cm}

\centering
\renewcommand{\arraystretch}{1.2}
\begin{tabular}{|l|c|c|c|c|c|}
\hline
\textbf{Reference Alternatives} & $A_{14}$ & $A_{12}$ & $A_1$ & $A_6$ & $A_{18}$ \\ \hline
\textbf{Best-to-Others ($C_1$)} & & & & & \\ \hline
\textbf{Others-to-Worst ($C_1$)} & & & & & \\ \hline
\end{tabular}
\end{table}

\vspace{0.3cm}
\begin{table}[htbp]
\textit{Questionnaire Format for Criterion $C_2$ (Reference Set: $A_8, A_{10}, A_5, A_9, A_{14}$):}
\vspace{0.2cm}

\centering
\renewcommand{\arraystretch}{1.2}
\begin{tabular}{|l|c|c|c|c|c|}
\hline
\textbf{Reference Alternatives} & $A_8$ & $A_{10}$ & $A_5$ & $A_9$ & $A_{14}$ \\ \hline
\textbf{Best-to-Others ($C_2$)} & & & & & \\ \hline
\textbf{Others-to-Worst ($C_2$)} & & & & & \\ \hline
\end{tabular}
\end{table}

\vspace{0.3cm}
\begin{table}[htbp]
\textit{Questionnaire Format for Criterion $C_3$ (Reference Set: $A_{18}, A_{16}, A_{11}, A_3, A_5$):}
\vspace{0.2cm}

\centering
\renewcommand{\arraystretch}{1.2}
\begin{tabular}{|l|c|c|c|c|c|}
\hline
\textbf{Reference Alternatives} & $A_{18}$ & $A_{16}$ & $A_{11}$ & $A_3$ & $A_5$ \\ \hline
\textbf{Best-to-Others ($C_3$)} & & & & & \\ \hline
\textbf{Others-to-Worst ($C_3$)} & & & & & \\ \hline
\end{tabular}
\end{table}

\vspace{0.3cm}

\begin{table}[htbp]
\textit{Questionnaire Format for Criterion $C_4$ (Reference Set: $A_{14}, A_5, A_{16}, A_7, A_3$):}
\vspace{0.2cm}

\centering
\renewcommand{\arraystretch}{1.2}
\begin{tabular}{|l|c|c|c|c|c|}
\hline
\textbf{Reference Alternatives} & $A_{14}$ & $A_5$ & $A_{16}$ & $A_7$ & $A_3$ \\ \hline
\textbf{Best-to-Others ($C_4$)} & & & & & \\ \hline
\textbf{Others-to-Worst ($C_4$)} & & & & & \\ \hline
\end{tabular}
\end{table}

\vspace{0.3cm}
\begin{table}[htbp]
\textit{Questionnaire Format for Criterion $C_5$ (Reference Set: $A_{17}, A_{19}, A_5, A_2, A_6$):}
\vspace{0.2cm}

\centering
\renewcommand{\arraystretch}{1.2}
\begin{tabular}{|l|c|c|c|c|c|}
\hline
\textbf{Reference Alternatives} & $A_{17}$ & $A_{19}$ & $A_5$ & $A_2$ & $A_6$ \\ \hline
\textbf{Best-to-Others ($C_5$)} & & & & & \\ \hline
\textbf{Others-to-Worst ($C_5$)} & & & & & \\ \hline
\end{tabular}
\end{table}

\end{document}